\newcommand{\arxiv}[1]{\href{http://arxiv.org/abs/#1}{\texttt{arXiv:#1}}}
\theoremstyle{plain}
\newtheorem{theorem}{Theorem}[section]
\newtheorem{corollary}[theorem]{Corollary}
\newtheorem{proposition}[theorem]{Proposition}
\theoremstyle{definition}
\newtheorem{definition}[theorem]{Definition}
\newtheorem{example}[theorem]{Example}
\newtheorem{problem}[theorem]{Problem}
\newtheorem{question}[theorem]{Question}
\theoremstyle{remark}
\newtheorem{remark}[theorem]{Remark}
\title{\bf Quasisymmetric functions for nestohedra}
\author{Vladimir Gruji\'c\\
\small Faculty of Mathematics\\[-0.8ex]
\small Belgrade University, Serbia\\[-0.8ex]}
\date{
\small Mathematics Subject Classifications: 52B20, 16T05}
\begin{document}

\maketitle
\begin{abstract}
For a generalized permutohedron $Q$ the enumerator $F(Q)$ of
positive lattice points in interiors of maximal cones of the
normal fan $\Sigma_Q$ is a quasisymmetric function. We describe
this function for the class of nestohedra as a Hopf algebra
morphism from a combinatorial Hopf algebra of building sets. For
the class of graph-associahedra the corresponding quasisymmetric
function is a new isomorphism invariant of graphs. The obtained
invariant is quite natural as it is the generating function of
ordered colorings of graphs and satisfies the recurrence relation
with respect to deletions of vertices.
\end{abstract}

\section{Introduction}
Let $Q$ be a convex polytope. The {\it normal fan} $\Sigma_Q$ is
the set of cones over the faces of the polar polytope $Q^\ast$.
The polytope $Q$ is simple if and only if the normal fan
$\Sigma_Q$ is simplicial. The polytope $Q$ is a {\it Delzant}
polytope if its normal fan $\Sigma_Q$ is regular, i.e. the
generators of the normal cone $\sigma_v$ at any vertex $v\in Q$
can be chosen to form an integer basis of $\mathbb{Z}^n$.

The permutohedron $Pe^{n-1}$ is a $(n-1)$-dimensional polytope
which is the convex hull $Pe^{n-1}={\rm Conv}\{x_\omega\mid
\omega\in S_n\}$, where $x\in\mathbb{R}^n$ is a point with
strictly increasing coordinates $x_1<\cdots<x_n$ and
$x_\omega=(x_{\omega(1)},\ldots,x_{\omega(n)})$ for a permutation
$\omega\in S_n$. The normal fan $\Sigma_{Pe^{n-1}}$ of the
permutohedron $Pe^{n-1}$ is the braid arrangement fan. Postnikov
introduced in \cite{P} a class of convex polytopes called
generalized permutohedra, which includes some interesting
subclasses with a rich combinatorial structure, such as matroid
base polytopes, graphic zonotopes, nestohedra and
graph-associahedra.

A {\it generalized permutohedron} $Q$ in $\mathbb{R}^{n}$ is a
convex polytope characterized by equivalent conditions (see
\cite[Theorem 15.3]{PRW} for the general statement)
\begin{itemize}
\item[(i)] the normal fan $\Sigma_Q$ is refined by the braid
arrangement fan $\Sigma_{Pe^{n-1}}$

\item[(ii)] any edge lies in a direction $e_i-e_j$ for some $1\leq
i,j\leq n$

\item[(iii)] $Q$ is a Minkowski summand of the permutohedron
$Pe^{n-1}$.
\end{itemize}
We regard a function $f:[n]\rightarrow\mathbb{N}$ as an element of
$(\mathbb{R}^{n})^{\ast}$ by $\langle
f,x\rangle=\sum_{i=1}^{n}f(i)x_i, x\in\mathbb{R}^{n}$. For a
generalized permutohedron $Q$ in $\mathbb{R}^n$ a function
$f:[n]\rightarrow\mathbb{N}$ is called $Q$-{\it generic} if it has
a unique maximum over  $Q$ at a vertex $\displaystyle{\max_{x\in
Q}}\langle f,x\rangle=\langle f,v\rangle$. Thus it lies in the
interior of the normal cone $\sigma_v$ for some vertex $v\in Q$.
Let $F(Q)$ be the enumerator function of $Q$-generic functions

$$F(Q)=\sum_{f: \ Q-\mbox{\rm\small generic}}\mathbf{x}_f=
\sum_{v\in Q}\sum_{f\in\sigma_v}\mathbf{x}_f,$$ where
$\mathbf{x}_f=x_{f(1)}\cdots x_{f(n)}$. This power series is
introduced and its main properties are derived by Billera, Jia and
Reiner in (\cite{BJR}, Section 9). It is a homogeneous
quasisymmetric function of degree $n$. Consider its expansion in
the monomial basis of quasisymmetric functions

$$F(Q)=\sum_{\alpha\models n}\zeta_\alpha(Q)M_\alpha,$$
where $M_\alpha=\sum_{i_1<\cdots<i_k}x_{i_1}^{a_1}\cdots
x_{i_k}^{a_k}$ for a composition $\alpha=(a_1,\ldots,a_k)\models
n$ of the integer $n$.

If $Q=Z_\Gamma$ is a graphic zonotope the function $F(Z_\Gamma)$
is easily seen to be Stanley's chromatic symmetric function
$X_\Gamma$ of the graph $\Gamma$ \cite{S1}. For the matroid base
polytope $Q=P_M$ the quasisymmetric function $F(P_M)$ is an
isomorphism invariant of a matroid $M$ introduced by Billera, Jia
and Reiner in \cite{BJR}. The unifying principle of these two
examples is a construction of certain combinatorial Hopf algebras
such that prescribed invariants are obtained by the universal
morphism to quasisymmetric functions. The theory of combinatorial
Hopf algebras is developed by Aguiar, Bergeron and Sotille in
\cite{ABS}. We particularly respond to \cite[Problem 9.3]{BJR} and
study the quasisymmetric functions $F(Q)$ for the class of
nestohedra.


The nestohedron $Q=P_B$ is a simple polytope obtained from a
simplex by a sequence of face truncations. The family of faces by
which truncations are performed is encoded by a building set $B$,
which is a subset of the face lattice of the simplex. The ground
sets of connected subgraphs of a graph $\Gamma$ produce the
graphical building set $B(\Gamma)$. The class of polytopes
$P_{B(\Gamma)}$ is called graph-associahedra. It contains an
important series of polytopes such as associahedra or Stasheff
polytopes, cyclohedra or Bott-Taubes polytopes, stellohedra and
permutohedra. For the class of nestohedra we describe coefficients
$\zeta_\alpha(P_B)$ in terms of underlying building set $B$. We
construct a certain combinatorial Hopf algebra of building sets
$\mathcal{B}$ and show that the canonical morphism maps a building
set $B$ precisely to the generating function $F(P_B)$ of the
corresponding nestohedron $P_B$.


Recently, some of quasisymmetric refinements of Stanley's
chromatic symmetric function are appeared, see \cite{H},
\cite{SW}. We introduce a new quasisymmetric function invariant
$F_\Gamma$ associated to a graph $\Gamma$ which has independent
combinatorial and algebraic descriptions as

1) the enumerator function of $P_{B(\Gamma)}$-generic functions,

2) the Hopf morphism from certain combinatorial Hopf algebra of
graphs,

3) the enumerator function of {\it ordered colorings} of $\Gamma$.

\noindent We say a coloring of a graph is ordered if colors are
linearly ordered and monochromatic vertices are not connected by
paths trough vertices colored by smaller colors. In addition the
function $F_\Gamma$ satisfies the recurrence relation with respect
to deletions of vertices
$$F_\Gamma=\sum_{v\in V}(F_{\Gamma\setminus v})_1,$$
where $F\mapsto (F)_1$ is a certain shifting operator on
quasisymmetric functions.

The paper is organized as follows. In section 2 we review the
necessary facts about nestohedra. In section 3 we review weak
orders and preorders and their connections with combinatorics of
the permutohedron. In section 4 we construct the combinatorial
Hopf algebra $\mathcal{B}$ and prove that the assignment $B\mapsto
F(P_B)$ comes from the universal Hopf algebra morphism to
quasisymmetric functions. In section 5 the function $F(P_B)$ is
related with the multiset of unlabeled rooted trees associated to
vertices of $P_B$. In section 6 the theory of $P$-partitions is
used to determine the action of the antipode of quasisymmetric
functions on $F(P_B)$. In section 7 we give a graph theoretic
interpretation of the invariant $F(P_{B(\Gamma)})$. We prove the
recurrence relation for $F_\Gamma$ with respect to deletions of
vertices of a graph which serves as the main computational tool.
As an application we compute $F(Q)$ for $Q$ be a permutohedron,
associahedron, cyclohedron or stellohedron.  As the conclusion
some open problems concerning the graph invariant $F_\Gamma$ and
Hopf algebra $\mathcal{B}$ are posed.

\section{Nestohedra}

In this section we review the necessary definitions and facts
about nestohedra. This class of polytopes is introduced and
studied in \cite{FS}, \cite{P}, \cite{PRW}, \cite{Z}.

Let $\Delta_{[n]}={\rm Conv}\{e_1,\ldots,e_n\}$ be the standard
coordinate simplex in $\mathbb{R}^n$. To a subset $I\subset [n]$
corresponds the face $\Delta_I={\rm Conv}\{e_i\mid i\in
I\}\subset\Delta_{[n]}$. A hypergraph $B$ on the finite set
$[n]=\{1,\ldots,n\}$ is a collection of nonempty subsets of $[n]$.
For convenience we suppose that $\{i\}\in B, i\in [n]$. For a
hypergraph $B$ on $[n]$ define the polytope $P_B$ as the Minkowski
sum of simplicies

\[P_B=\sum_{I\in B}\Delta_I=\sum_{I\in B}{\rm Conv}\{e_i\mid i\in
I\}={\rm Conv}\sum_{I\in B}\{e_i\mid i\in I\}.\] The polytope
$P_B$ is simple if additionally the hypergraph $B$ satisfies the
following condition:

$\diamond$ If $I, J\in B$ and $I\cap J\neq\emptyset$ then $I\cup
J\in B$.

\noindent In that case $B$ is called a {\it building set} and the
polytope $P_B$ is called a {\it nestohedron}.

\begin{example}\label{graph}
Given a simple graph $\Gamma$ on the vertex set $[n]$, the
graphical building set $B(\Gamma)$ is defined as the collection of
all $I\subset [n]$ such that the induced subgraphs $\Gamma\mid_I$
are connected. Carr and Devadoss studied polytopes $P_{B(\Gamma)}$
in \cite{CD} and called them {\it graph-associahedra}. For
instance the series $Pe^{n-1}, As^{n-1}, Cy^{n-1}, St^{n-1}, n>2$
of permutohedra, associahedra, cyclohedra and stellohedra
correspond respectively to complete graphs $K_n$, path graphs
$L_n$, cycle graphs $C_n$ and star graphs $K_{1,n-1}$ on $n$
vertices.
\end{example}

Let $B_{\rm max}$ be the collection of maximal by inclusion
elements of a building set $B$. We say that a building set $B$ is
{\it connected} if $[n]\in B$. Since the Minkowski sum is the
product for polytopes which are contained in the complementary
subspaces, we have

\[P_B=\sum_{I\in B_{\rm
max}}\sum_{J\in B\mid_I}\Delta_J=\prod_{I\in B_{\rm
max}}P_{B\mid_I}.\] Thus we may restrict ourselves to connected
building sets. The realization of nestohedra is given by the
following proposition.

\begin{proposition}[\cite{FS}, Proposition 3.12]\label{equalities}
Let $B$ be a connected building set on the finite set $[n]$ and
$\mu(B)$ be the number of elements of $B$. The nestohedron $P_B$
can be described as the intersection of the hyperplane $H_{[n]}$
with the halfspaces $H_{I,\geq}$ corresponding to all $I\in
B\setminus\{[n]\}$, where
\[H_{[n]}=\{x\in\mathbb{R}^n\mid
\sum_{i\in[n]}x_i=\mu(B)\},\]
\[H_{I,\geq}=\{x\in\mathbb{R}^n\mid \sum_{i\in
I}x_i\geq \mu(B\mid_I)\}.\]
\end{proposition}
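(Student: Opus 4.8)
The plan is to run everything through the support function of the Minkowski sum and then invoke the generalized-permutohedron structure of $P_B$. Recall that the support function $h_P(u)=\max_{x\in P}\langle u,x\rangle$ is additive under Minkowski sums and that $h_{\Delta_I}(u)=\max_{i\in I}u_i$, so that
$$h_{P_B}(u)=\sum_{I\in B}\max_{i\in I}u_i.$$
First I would settle the ambient hyperplane: every vertex $e_i$ of every $\Delta_I$ has coordinate sum $1$, hence each point of $P_B=\sum_{I\in B}\Delta_I$ has coordinate sum equal to the number $\mu(B)$ of summands, giving $P_B\subseteq H_{[n]}$.

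Next I would check that each listed inequality is valid and tight. Taking $u=-\mathbf{1}_I$ (minus the indicator of $I$), the term $\max_{j\in J}u_j$ equals $-1$ when $J\subseteq I$ and $0$ otherwise, so $h_{P_B}(-\mathbf{1}_I)=-|\{J\in B:J\subseteq I\}|=-\mu(B\mid_I)$. Consequently $\min_{x\in P_B}\sum_{i\in I}x_i=\mu(B\mid_I)$, which shows $P_B\subseteq H_{I,\geq}$ for every $I$ and that the bound is attained on a face. Together with the previous step this gives the inclusion $P_B\subseteq H_{[n]}\cap\bigcap_{I\in B\setminus\{[n]\}}H_{I,\geq}=:R$.

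The reverse inclusion $R\subseteq P_B$ is the heart of the matter and I expect it to be the main obstacle. Since $P_B$ is a Minkowski sum of faces of the standard simplex it is a generalized permutohedron, so its normal fan is coarsened by the braid fan, whose rays are indexed by proper subsets; reading each facet normal inside $H_{[n]}$ it is therefore of the form $-\mathbf{1}_I$ for some $I\subsetneq[n]$, and by the computation above the associated facet inequality is exactly $\sum_{i\in I}x_i\geq\mu(B\mid_I)$. Thus $P_B=H_{[n]}\cap\bigcap_{I\subsetneq[n]}H_{I,\geq}$, where a priori all proper subsets occur, and it remains to discard those not in $B$. Here the building-set axiom $\diamond$ enters decisively: for any $I\subseteq[n]$ the maximal-by-inclusion members of $B\mid_I$ are pairwise disjoint (an intersection would force their union into $B\mid_I$ by $\diamond$) and cover $I$ (each singleton lies in $B$), hence partition $I$ as $I=I_1\sqcup\cdots\sqcup I_k$.

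When $I\notin B$ one has $k\geq2$; moreover each $I_t\in B$, and every $J\in B$ with $J\subseteq I$ lies in a unique block $I_t$, so $B\mid_I=\bigsqcup_t B\mid_{I_t}$ and $\mu(B\mid_I)=\sum_t\mu(B\mid_{I_t})$. The inequality indexed by $I$ is then the sum of those indexed by $I_1,\ldots,I_k\in B$, hence redundant. Deleting all such $I$ leaves exactly the halfspaces indexed by $B\setminus\{[n]\}$, so $R=P_B$ and the proof is complete. The only genuinely delicate ingredients are the claim that the facet normals of a generalized permutohedron are the vectors $-\mathbf{1}_I$ and the partition property of $B\mid_I$; both are standard consequences of the definitions, the latter being the structural role of the building-set axiom.
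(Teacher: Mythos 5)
The paper does not prove this proposition at all --- it is quoted verbatim from Feichtner--Sturmfels \cite{FS} (see also Postnikov \cite{P}) --- so there is nothing internal to compare against; your argument has to stand on its own, and it does. The three ingredients you use (additivity of the support function over the Minkowski sum, the fact that a sum of faces of $\Delta_{[n]}$ is a generalized permutohedron so that every facet normal is, modulo $\mathbb{R}\cdot(1,\dots,1)$, of the form $-\mathbf{1}_I$ with value $-\mu(B\mid_I)$, and the redundancy of the inequality for $I\notin B$ via the partition of $I$ by the maximal elements of $B\mid_I$) are exactly the standard route, and each step checks out: the maximal elements of $B\mid_I$ are pairwise disjoint by axiom $\diamond$, cover $I$ because singletons lie in $B$, and give $\mu(B\mid_I)=\sum_t\mu(B\mid_{I_t})$, so the $I$-inequality is the sum of the $I_t$-inequalities. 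Two small points to tighten: the phrase ``its normal fan is coarsened by the braid fan'' is backwards (the normal fan is a \emph{coarsening} of the braid fan, i.e.\ the braid fan refines it); and the step ``$P_B$ equals $H_{[n]}$ intersected with its facet half-spaces'' silently uses that $P_B$ is full-dimensional in $H_{[n]}$, which is where connectedness of $B$ enters (the summand $\Delta_{[n]}$ already has dimension $n-1$). Neither affects correctness.
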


As a consequence we have that the nestohedron $P_B$ corresponding
to a connected building set $B$ is realized from the dilated
standard coordinate simplex
$\{(x_1,\ldots,x_n)\in\mathbb{R}^{n}\mid x_1+\cdots+x_n=\mu(B),
x_1,\ldots,x_n\geq 1\}$ by a sequence of face truncations which
are encoded by elements $I\in B\setminus\{[n]\}$. For instance the
truncations along all faces of the simplex
$\{(x_1,\ldots,x_n)\in\mathbb{R}^{n}\mid x_1+\cdots+x_n=2^{n}-1,
x_1,\ldots,x_n\geq 1\}$ realize the permutohedron $Pe^{n-1}$.

The face lattice of $P_B$ is described by the following
proposition.

\begin{proposition}[\cite{FS}, Theorem 3.14; \cite{P}, Theorem 7.4]\label{nested}
Given a connected bu\-ilding set $B$ on $[n]$, let $\{F_I\mid I\in
B \setminus\{[n]\}\}$ be the set of facets of the nestohedron
$P_B$. The intersection $F_{I_1}\cap\ldots\cap F_{I_k}, k\geq2$ is
a nonempty face of $P_B$ if and only if
\begin{itemize}
\item[({\rm N1})] $I_i\subset I_j$ or $I_j\subset I_i$ or $I_i\cap
I_j=\emptyset$ for any $1\leq i<j\leq k$. \item[({\rm N2})]
$I_{j_1}\cup\cdots\cup I_{j_p}\notin B$ for any pairwise disjoint
sets $I_{j_1},\ldots,I_{j_p}$.
\end{itemize}
\end{proposition}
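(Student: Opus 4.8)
The plan is to read off faces directly from the inequality presentation of Proposition~\ref{equalities}. A nonempty intersection $F_{I_1}\cap\cdots\cap F_{I_k}$ is a face of $P_B$ precisely when there is a point $x\in P_B$ at which all of the supporting inequalities $\sum_{i\in I_j}x_i\ge\mu(B\mid_{I_j})$ are simultaneously tight, so the task is to show such an $x$ exists if and only if (N1) and (N2) hold. A useful preliminary remark is that the inequality $\sum_{i\in I}x_i\ge\mu(B\mid_I)$ is in fact valid on $P_B$ for \emph{every} $I\subseteq[n]$, not only for $I\in B$: since $P_B=\sum_{K\in B}\Delta_K$ is a Minkowski sum, the minimum of $\sum_{i\in I}x_i$ over $P_B$ is the sum of the minima over the $\Delta_K$, which counts exactly the $K\in B$ with $K\subseteq I$, i.e. $\mu(B\mid_I)$. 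The single tool driving the ``only if'' direction is the supermodularity of $I\mapsto\mu(B\mid_I)=|\{J\in B:J\subseteq I\}|$: because $B\mid_I\cap B\mid_J=B\mid_{I\cap J}$ while $B\mid_{I\cup J}$ contains $B\mid_I\cup B\mid_J$ together with every further element of $B$ straddling both sets, inclusion--exclusion yields $\mu(B\mid_{I\cup J})+\mu(B\mid_{I\cap J})\ge\mu(B\mid_I)+\mu(B\mid_J)$, with a strict gap whenever some $K\in B$ lies in $I\cup J$ but in neither $I$ nor $J$.

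For the ``only if'' direction, fix a point $x$ in the face. To prove (N1), suppose $I_i,I_j$ overlap without nesting; then $I_i\cap I_j\ne\emptyset$, so the building-set axiom $\diamond$ forces $I_i\cup I_j\in B$, and this set is a strict witness for supermodularity. Adding the two tight equalities and using $\mathbf{1}_{I_i}+\mathbf{1}_{I_j}=\mathbf{1}_{I_i\cup I_j}+\mathbf{1}_{I_i\cap I_j}$ together with the validity of the lower bounds for $I_i\cup I_j$ and $I_i\cap I_j$ gives $\mu(B\mid_{I_i})+\mu(B\mid_{I_j})\ge\mu(B\mid_{I_i\cup I_j})+\mu(B\mid_{I_i\cap I_j})$, contradicting strict supermodularity. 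To prove (N2), suppose pairwise disjoint $I_{j_1},\dots,I_{j_p}$ had union $J\in B$; summing the $p$ tight equalities over disjoint index sets gives $\sum_{i\in J}x_i=\sum_l\mu(B\mid_{I_{j_l}})$, while $B\mid_J$ contains the disjoint families $B\mid_{I_{j_l}}$ together with the extra element $J$ itself, so $\mu(B\mid_J)>\sum_l\mu(B\mid_{I_{j_l}})$, contradicting the feasibility inequality $\sum_{i\in J}x_i\ge\mu(B\mid_J)$ (or the equality $\sum_i x_i=\mu(B)$ when $J=[n]$).

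For the ``if'' direction I would induct on $\mu(B)$ using the facet decomposition $F_I\cong P_{B\mid_I}\times P_{B/I}$ recorded after Proposition~\ref{equalities}. Given a collection satisfying (N1),(N2), choose an inclusion-maximal member $I_1$; by (N1) every other $I_l$ is either contained in $I_1$ or disjoint from it. The members contained in $I_1$ index facets of $P_{B\mid_{I_1}}$, and the members disjoint from $I_1$ index facets of $P_{B/I_1}$ via $J\mapsto J\in B/I_1$ (note that none of them equals $[n]\setminus I_1$, since $I_1\cup([n]\setminus I_1)=[n]\in B$ would violate (N2)). One checks these two subcollections inherit (N1),(N2) for $B\mid_{I_1}$ and $B/I_1$ respectively; the inductive hypothesis then produces nonempty faces in both factors, whose product is a nonempty face of $F_{I_1}$, and matching it against the facet correspondence identifies it with $F_{I_1}\cap\cdots\cap F_{I_k}$.

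The main obstacle is verifying the inherited (N2) for the contraction $B/I_1$, because membership in $B/I_1$ is weaker than membership in $B$. If pairwise disjoint $J_1,\dots,J_p$, all avoiding $I_1$, had $\bigcup_l J_l\in B/I_1$, the definition of contraction gives either $\bigcup_l J_l\in B$, which directly contradicts the original (N2), or $I_1'\cup\bigcup_l J_l\in B$ for some nonempty $I_1'\subseteq I_1$. In the latter case this set meets $I_1\in B$ in $I_1'\ne\emptyset$, so axiom $\diamond$ yields $I_1\cup\bigcup_l J_l\in B$; then $I_1,J_1,\dots,J_p$ is a pairwise disjoint family drawn from the original collection whose union lies in $B$, again contradicting (N2). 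Thus it is precisely the presence of the maximal set $I_1$ in the collection that rescues the contracted condition, and the one remaining piece of bookkeeping is to pin down the bijection between the facets of $P_{B\mid_{I_1}}\times P_{B/I_1}$ and the facets $F_J$ of $P_B$ meeting $F_{I_1}$.
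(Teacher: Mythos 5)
The paper offers no proof of this proposition --- it is quoted from Feichtner--Sturmfels and Postnikov --- so there is no internal argument to compare against; judged on its own, your proof is essentially correct and is a genuinely different, self-contained route from the blow-up/nested-set-complex arguments in the cited sources. Working straight from the inequality description of Proposition~\ref{equalities} is a good idea: the observation that $\sum_{i\in I}x_i\ge\mu(B\mid_I)$ is valid for \emph{all} $I\subseteq[n]$ (minimum of a linear functional over a Minkowski sum is the sum of the minima) plus strict supermodularity of $I\mapsto\mu(B\mid_I)$ disposes of the ``only if'' direction in a few lines, and your verification that (N2) is inherited by the contraction $B/I_1$ --- absorbing the witness $I_1'\cup\bigcup_l J_l$ into $I_1\cup\bigcup_l J_l$ via axiom $\diamond$ --- is exactly the nontrivial point in the inductive ``if'' direction, and you get it right. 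The one step you leave as ``bookkeeping'' deserves to be done explicitly, because it is where the hypothesis (N2) enters a second time: you must check that for $J$ in your collection disjoint from $I_1$, the facet $F_J$ of $P_B$ actually meets $F_{I_1}$ in the facet of the product predicted by $F_{I_1}\cong P_{B\mid_{I_1}}\times P_{B/I_1}$. This follows from the same Minkowski-sum computation: the face of $P_B$ minimizing $\sum_{i\in I_1}x_i$ is $\sum_{K\subseteq I_1}\Delta_K+\sum_{K\not\subseteq I_1}\Delta_{K\setminus I_1}$, and the minimum of $\sum_{i\in J}x_i$ over this face exceeds $\mu(B\mid_J)$ exactly when some $K\in B$ contained in $I_1\cup J$ meets both $I_1$ and $J$; two applications of axiom $\diamond$ show such a $K$ would force $I_1\cup J\in B$, which (N2) forbids. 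Since this is the same trick you already use for the contracted (N2), the proof closes up; I would only add that inducting on $n$ rather than $\mu(B)$ makes the decrease in both factors $B\mid_{I_1}$ and $B/I_1$ immediate.
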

A subcollection $\{I_1,\ldots,I_k\}\subset B$ that satisfies the
conditions (N1) and (N2) is called a {\it nested set}. The
collection $N_B$ of all nested sets form a simplicial complex
called the {\it nested set complex}. The face poset of $N_B$ is
opposite to the face poset of $P_B$. Therefore $N_B$ may be
realized as a simplicial polytope which is polar to $P_B$.

Proposition \ref{nested} implies that vertices of $P_B$ correspond
to maximal nested sets. We denote this correspondence by $v\mapsto
N_v$. To a vertex $v\in P_B$ associate the poset
$(N_v\cup\{[n]\},\subset)$. For $I\in N_v\cup\{[n]\}$ let $i_I\in
[n]$ be the element such that $\{i_I\}=I\setminus\cup\{J\in
N_v\mid J\subsetneq I\}$. The correspondence $I\mapsto i_I$ is a
well defined bijection by the characterization of maximal nested
sets (\cite{P}, Proposition 7.6). It defines the partial order
$\leq_v$ on $[n]$ by $i_I\leq_v i_J$ if and only if $I\subset J$
in $N_v\cup\{[n]\}$. Denote this poset on $[n]$ by $P_v$. The
Hasse diagram $T_v$ of the poset $P_v$ for $v\in P_B$ is called a
$B$-{\it tree} \cite[Definition 8.1]{PRW}. So $(i,j)\in T_v$ if
and only if $i\lessdot_{P_v}j$ is a covering relation in the poset
$P_v$. The root of $T_v$ is the maximal element of $P_v$.


The following proposition, which is a consequence of Proposition
\ref{equalities}, describes the coordinates and normal cones at
vertices of $P_B$. Note that any nested set
$\{I_1,\ldots,I_k\}\subset B$ is ordered by inclusion of sets. The
usual covering relations is denoted by $J\lessdot I$.

\begin{proposition}\label{cones}
Let $v\in P_B$ be a vertex of the nestohedron $P_B$ and $N_v\in
N_B$ be the corresponding maximal nested set.
\begin{itemize}
\item[(i)] The coordinates of the vertex $v$ are given by
\[x_{i_I}=\mu(B\mid_I)-\sum_{J\in N_v: J\lessdot I}\mu(B\mid_J), I\in
N_v\cup\{[n]\}.\]

\item[(ii)] The interior of the normal cone $\sigma_v$ at the
vertex $v$ is determined by the inequalities \[x_i<x_j, \
\mbox{for all} \ (i,j)\in T_v.\]
\end{itemize}
\end{proposition}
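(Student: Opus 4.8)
The plan is to read off both (i) and (ii) from the hyperplane description of Proposition~\ref{equalities} together with the combinatorial structure of the maximal nested set $N_v$. The single fact that drives everything is the decomposition
\[ I=\{i_I\}\sqcup\bigsqcup_{J\in N_v:\,J\lessdot I}J,\qquad I\in N_v\cup\{[n]\}. \]
First I would establish this. The covers $J\lessdot I$ in $N_v\cup\{[n]\}$ are pairwise incomparable elements of a nested set, so by condition (N1) they are pairwise disjoint; moreover every $J'\in N_v$ with $J'\subsetneq I$ lies below some cover, whence $\bigcup\{J\in N_v:\,J\subsetneq I\}=\bigsqcup_{J\lessdot I}J$. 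Since by definition $\{i_I\}=I\setminus\bigcup\{J\in N_v:\,J\subsetneq I\}$, the displayed decomposition follows. Equivalently, $I=\{\,i_J:J\in N_v\cup\{[n]\},\,J\subseteq I\,\}$, using that for such $I$ one has $i_J\in I\Leftrightarrow J\subseteq I$ (nestedness forces $I,J$ comparable once they meet, and $i_J$ avoids every proper subset of $J$ in $N_v$).

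For (i), the vertex $v$ lies on the facet hyperplanes $H_I$ for all $I\in N_v$ and on $H_{[n]}$, so its coordinates satisfy $\sum_{i\in I}x_i=\mu(B\mid_I)$ for every $I\in N_v\cup\{[n]\}$. Subtracting the equations attached to the covers $J\lessdot I$ from the equation attached to $I$ and using the disjoint decomposition gives
\[ x_{i_I}=\sum_{i\in I}x_i-\sum_{J\lessdot I}\sum_{i\in J}x_i=\mu(B\mid_I)-\sum_{J\in N_v:\,J\lessdot I}\mu(B\mid_J), \]
which is the asserted formula. Because $I\mapsto i_I$ is a bijection onto $[n]$ and $x_{i_I}$ is the leading variable of the equation indexed by $I$ under any linear extension of $\subseteq$, the linear system is unitriangular and thus has this as its unique solution, consistent with $v$ being a single point.

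For (ii), I would use that $P_B$ is cut out inside $H_{[n]}$ by the inequalities $\langle\mathbf{1}_I,x\rangle\ge\mu(B\mid_I)$, so the normal cone for maximization at $v$ is $\sigma_v=\{\,-\sum_{I\in N_v}\lambda_I\mathbf{1}_I+\nu\mathbf{1}:\lambda_I\ge0,\ \nu\in\mathbb{R}\,\}$, with relative interior given by $\lambda_I>0$. Evaluating a coordinate of such an $f=-\sum_I\lambda_I\mathbf{1}_I+\nu\mathbf{1}$ and again invoking $i_K\in I\Leftrightarrow K\subseteq I$ yields $f_{i_K}=\nu-\sum_{I\in N_v:\,K\subseteq I}\lambda_I$, so the common shift $\nu$ cancels in differences. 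For a cover $J\lessdot K$ a short telescoping using (N1) shows that the only $I\in N_v$ with $J\subseteq I$ but $K\not\subseteq I$ is $I=J$, giving $f_{i_K}-f_{i_J}=\lambda_J$. Hence $\lambda_J>0$ for all covers is equivalent to $x_{i_J}<x_{i_I}$ for all $J\lessdot I$ in $N_v$, identifying the relative interior of $\sigma_v$ with the stated inequalities.

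I expect the main obstacle to be the bookkeeping in (ii): correctly orienting the cone (the $\ge$ inequalities make $-\mathbf{1}_I$ the outer normals, and the constraint to $H_{[n]}$ explains the free direction $\mathbf{1}$), and pinning down, for each cover $J\lessdot K$, exactly which generators $\mathbf{1}_I$ survive in $f_{i_K}-f_{i_J}$. The decomposition of $I$ and condition (N1) do the real work; once the telescoping $f_{i_K}-f_{i_J}=\lambda_J$ is in hand, both the cone description and the bijection between facet multipliers $\lambda_I$ and edge inequalities fall out at once.
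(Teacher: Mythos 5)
Your proposal is correct, and it takes exactly the route the paper intends: the paper states Proposition~\ref{cones} only as ``a consequence of Proposition~\ref{equalities}'' without supplying details, and your argument carries out that derivation — the disjoint decomposition $I=\{i_I\}\sqcup\bigsqcup_{J\lessdot I}J$ (resting on the cited singleton property of $i_I$ and on (N1)), the resulting unitriangular system for (i), and the outer-normal/telescoping computation $f_{i_K}-f_{i_J}=\lambda_J$ for (ii) all check out, and they are consistent with the paper's explicit permutohedron example. No gaps.
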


\section{Preorders, Weak orders and Permutohedra}

A binary relation $\precsim$ is called a {\it preorder} on the
finite set $V=\{v_1,\ldots,v_n\}$ if it is reflexive and
transitive. If it is in addition total, i.e. $u\precsim v$ or
$v\precsim u$ for all $u,v\in V$, the preorder $\precsim$ is
called a {\it weak order} or a total preorder. The preorder
defines an equivalence relation by $u\thicksim v$ if and only if
$u\precsim v$ and $v\precsim u$. The relation $\precsim/\thicksim$
is a partial order on the set of equivalence classes
$V/\thicksim$. If $\precsim$ is a weak order on $V$ then
$\precsim/\thicksim$ is a total order on $V/\thicksim$. Any weak
order is represented as an ordered partition of $V$, i.e. as the
ordered family $(V_1,\ldots,V_k)$ of nonempty disjoint subsets
which covers $V$. The relation is recovered by $u\precsim v$ if
and only if $u\in V_i$ and $v\in V_j$ for some $1\leq i\leq j\leq
k$. The type of a weak order $\precsim$ is the corresponding
composition $\mbox{type}(\precsim)=(|V_1|,\ldots,|V_k|)\models n$
and $k$ is its length. Any function $f:V\rightarrow\mathbb{N}$
determines a weak order on $V$ by $u\precsim_f v$ if $f(u)\leq
f(v)$, for all $u,v\in V$. For any strictly increasing function
$g:\mathbb{N}\rightarrow\mathbb{N}$ we have
$\precsim_f=\precsim_{g\circ f}$. To a weak order $\precsim$ on
$V$ is associated the monomial quasisymmetric function

$$M_{{\rm type}(\precsim)}=\sum_{\precsim_f=\precsim}\mathbf{x}_f.$$

Let $\mathbf{WO}(n)=\cup_{k=1}^{n}\mathbf{WO}_k(n)$ be the set of
all weak orders of the set $V$ graded by the lengths. To an
ordered partition $(V_1,\ldots,V_k)$ is associated the flag
$\emptyset\subset V_1\subset V_1\cup V_2\subset\ldots\subset
V_1\cup\ldots\cup V_{k-1}\subset V$. This is a one-to-one
correspondence between ordered partitions and flags on $V$.
Therefore the set of all weak orders $\mathbf{WO}(n)$ is modelled
as the simplicial complex $\Delta[n]^{(1)}$ the first barycentric
subdivision of the simplex on $V$. The simplicial complex
$\Delta[n]^{(1)}$ is combinatorially equivalent to the convex
simplicial polytope whose polar polytope is the permutohedron
$Pe^{n-1}$ (see \cite{O}). Thus $k$-faces of $Pe^{n-1}$ are
labeled by ordered partitions $(V_1,\ldots,V_{n-k})$ or
equivalently by $(n-k)$-weak orders on $V$. Accordingly, to any
face $F\subset Pe^{n-1}$ is associated the monomial quasisymmetric
function $M_F$, where

$$M_F=M_{{\rm type}(\precsim_F)},$$ for the weak order $\precsim_F$ on
$V$corresponding to the face $F$. Specially, facets correspond to
pairs $(A,V\setminus A)$, for proper subsets $A\subset V$ and the
associated monomial quasissymetric functions are of the form
$M_{(k,n-k)}$ for $1\leq k\leq n$. Vertices correspond to linear
orders $v_{i_1}<\ldots<v_{i_n}$ on $V$ with associated monomial
quasisymmetric functions equal to $M_{(1,\ldots,1)}$.



By Proposition \ref{cones} the normal cone at the vertex $v\in
Pe^{n-1}$ that corresponds to a permutation
$\pi_v=(i_1,\ldots,i_n)$ is the Weyl chamber

$$\sigma_v=C_{\pi_v}: x_{i_1}<\cdots<x_{i_n}.$$
The braid arrangement $\mathcal{A}_n$ is the arrangement of
hyperplanes $$\mathcal{A}_n: x_i=x_j, 1\leq i,j\leq n$$ in the
quotient space
$\mathbb{R}^{n}/\mathbb{R}\cdot(1,\ldots,1)\cong\mathbb{R}^{n-1}$.
The normal fan $\Sigma_{Pe^{n-1}}$ of the permutohedron is the
simplicial fan defined by $\mathcal{A}_n$. A braid cone is the
polyhedral cone given by the conjuction of inequalities of the
form $x_i\leq x_j.$ There is an obvious bijection between
preorders $\precsim$ on $[n]$ and braid cones determined by
equivalency $x_i\leq x_j$ if and only if $i\precsim j$. The
correspondence and properties of preorders and braid cones are
given in \cite[Proposition 3.5]{PRW}. We remark that linear orders
on $[n]$ correspond to full-dimensional braid cones. The monomial
quasisymmetric function $M_F$ is precisely the enumerator for all
positive lattice points in the interior of the normal cone
associated to the face $F\subset Pe^{n-1}$.

For each generalized permutohedron $Q$ there is a map
$\Psi_Q:S_n\rightarrow{\rm Vertices}(Q)$ defined by $\Psi(\pi)=v$
if and only if the normal cone $\sigma_v$ of $Q$ at $v$ contains
the Weyl chamber $C_\pi$ or equivalently the permutation $\pi\in
S_n$ is a linear extension of the poset determined by the normal
cone at $v$ \cite[Corollary 3.9]{PRW}.

\section{Hopf algebra morphism}

The goal of this section is to show that the assignment of
quasisymmetric function $F(P_B)$ to a building set $B$ is a Hopf
algebra morphism. We use the theory of combinatorial Hopf algebras
developed in the originating paper by Aguiar, Bergeron and Sottile
\cite{ABS}. For an extensive survey of the theory see also
\cite{GR}. A combinatorial Hopf algebra $(\mathcal{H},\zeta)$ over
a field $\mathbf{k}$ is a graded, connected Hopf algebra equipped
with a multiplicative linear functional
$\zeta:\mathcal{H}\rightarrow\mathbf{k}$ called a character.

We construct a graded Hopf algebra associated with the species of
building sets in the sense of \cite{Sch}. Let $\mathcal{B}$ be the
graded vector space generated by the set of all isomorphism
classes of building sets. The grading is defined by the number of
vertices.

For a building set $B$ on $[n]$ and a subset $I\subset [n]$, let
$B\mid_I=\{J\subset I\mid J\in B\}$ be the induced building
subset. The contraction of $I\subset[n]$ from $B$ is the building
set on $[n]\setminus I$ defined by $B/I=\{J\subset [n]\setminus
I\mid J\in B \ {\rm or} \ I'\cup J\in B\ \ \mbox{for some} \
I'\subset I\}$. Define the multiplication and comultiplication by

\[B_1\cdot B_2=B_1\sqcup B_2 \ \ \mbox{and} \ \
\Delta(B)=\sum_{I\subset V}B\mid_I\otimes B/I.\] The unit is the
building set $B_\emptyset$ on the empty set and the counit is
defined by $\epsilon(B_\emptyset)=1$ and zero otherwise.

\begin{proposition}
The vector space $\mathcal{B}$ with the above defined operations
is a graded commutative and non-cocommutative connected bialgebra.
\end{proposition}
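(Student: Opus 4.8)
The statement bundles several assertions, of which only coassociativity and non-cocommutativity require real care; the rest are routine. The plan is to dispose of the algebra structure first, then reduce coassociativity and the counit axiom to a short calculus of restriction and contraction, then check the bialgebra compatibility and the grading, and finally exhibit an explicit building set witnessing non-cocommutativity. The algebra axioms are immediate: since we work with isomorphism classes and the product is disjoint union, associativity follows from $(B_1\sqcup B_2)\sqcup B_3\cong B_1\sqcup(B_2\sqcup B_3)$, commutativity from $B_1\sqcup B_2\cong B_2\sqcup B_1$, and the unit axiom from $B\sqcup B_\emptyset=B$. The product is graded because the number of vertices is additive under $\sqcup$, and $\mathcal{B}$ is connected because the only building set on the empty vertex set is $B_\emptyset$, so the degree zero part is spanned by $B_\emptyset$.

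For the coalgebra structure I would first record three elementary identities, valid for disjoint $J,K$ with $J\cup K\subset V$:
\[ (B\mid_{J\cup K})\mid_J = B\mid_J, \qquad (B\mid_{J\cup K})/J = (B/J)\mid_K, \qquad B/(J\cup K) = (B/J)/K, \]
each of which unwinds directly from the definitions of $B\mid_I$ and $B/I$ (the last because any $L\subset J\cup K$ splits as $(L\cap J)\cup(L\cap K)$). Applying $\Delta\otimes\mathrm{id}$ and $\mathrm{id}\otimes\Delta$ to $\Delta(B)$ produces sums indexed respectively by chains $J\subset I\subset V$ and by pairs $I\subset V$, $K\subset V\setminus I$; the substitution $I=J\cup K$ (equivalently $K=I\setminus J$) together with the three identities matches the summands term by term, which is coassociativity. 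The counit axiom is easier: in $(\epsilon\otimes\mathrm{id})\Delta(B)=\sum_I\epsilon(B\mid_I)\,B/I$ only $I=\emptyset$ survives and $B/\emptyset=B$, while in $(\mathrm{id}\otimes\epsilon)\Delta(B)$ only $I=V$ survives and $B\mid_V=B$. I expect the bijection-plus-identities step for coassociativity to be the main obstacle, though it is bookkeeping rather than depth.

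The bialgebra compatibility reduces to the fact that restriction and contraction distribute over disjoint union: for $I=I_1\sqcup I_2\subset V_1\sqcup V_2$ one has $(B_1\sqcup B_2)\mid_I=B_1\mid_{I_1}\sqcup B_2\mid_{I_2}$ and $(B_1\sqcup B_2)/I=B_1/I_1\sqcup B_2/I_2$, using that $B_1\sqcup B_2$ contains no set meeting both $V_1$ and $V_2$. Summing over $I=I_1\sqcup I_2$ then factors $\Delta(B_1\sqcup B_2)$ as $\Delta(B_1)\cdot\Delta(B_2)$ in $\mathcal{B}\otimes\mathcal{B}$; likewise $\epsilon$ is multiplicative and $\Delta(B_\emptyset)=B_\emptyset\otimes B_\emptyset$. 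That $\Delta$ is graded follows from $|B\mid_I|+|B/I|=|I|+|V\setminus I|=|V|$.

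Finally, to see $\mathcal{B}$ is not cocommutative, write $\tau(a\otimes b)=b\otimes a$ and take $B$ to be the connected graphical building set of the path $1\!-\!2\!-\!3$, namely $B=\{\{1\},\{2\},\{3\},\{1,2\},\{2,3\},\{1,2,3\}\}$. Here $B\mid_{\{1,3\}}=\{\{1\},\{3\}\}$ is the disconnected two-vertex building set while $B/\{1,3\}$ is a single point, so $\Delta(B)$ has the summand $B\mid_{\{1,3\}}\otimes B/\{1,3\}$ whose left tensor factor is disconnected. On the other hand, whenever $B$ is connected and $I\neq V$ the contraction $B/I$ is connected, since $I\cup(V\setminus I)=V\in B$ forces $V\setminus I\in B/I$; hence no summand of $\Delta(B)$ can carry a disconnected two-vertex building set in its right factor. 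Comparing the coefficients of the classes (disconnected $2$-vertex)$\,\otimes\,$(point) and (point)$\,\otimes\,$(disconnected $2$-vertex) then shows $\Delta(B)\neq\tau\,\Delta(B)$, so $\mathcal{B}$ is non-cocommutative.
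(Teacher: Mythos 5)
Your proof is correct and rests on the same two families of restriction/contraction identities that the paper's one-sentence proof cites for coassociativity and for compatibility of product and coproduct (your $(B\mid_{J\cup K})/J=(B/J)\mid_K$ and $B/(J\cup K)=(B/J)/K$ are the paper's identities with relabelled variables); the remaining algebra, counit, grading and connectedness checks are routine in both. The one genuine addition is your explicit witness for non-cocommutativity --- the path on three vertices, together with the observation that $B/I$ is connected whenever $B$ is connected and $I\neq V$ --- a point the paper asserts without argument; that example is valid.
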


\begin{proof}
The only nontrivial parts of the statement are the coassociativity
and the compatibility of operations, which follows from the
straightforward identities $(B/I)\mid_J=(B\mid_{I\sqcup J})/I,
(B/I)/J=B/(I\sqcup J)$ for any disjoint $I,J\subset V$  and
$(B_1\cdot B_2)\mid_{I_1\sqcup I_2}=B_1\mid_{I_1}\cdot
B_2\mid_{I_2}, (B_1\cdot B_2)/(I_1\sqcup I_2)=B_1/I_1\cdot
B_2/I_2$ for all $I_1\subset V_1, I_2\subset V_2$.
\end{proof}

The antipode of $\mathcal{B}$ is determined by general Takeuchi's
formula for the antipode of a graded connected bialgebra
(\cite[Lemma 14]{T}, see also \cite[Proposition 1.44]{GR})

\[S(B)=\sum_{k\geq
1}(-1)^k\sum_{\mathcal{L}_k}\prod_{j=1}^{k}(B\mid_{I_j})/I_{j-1},\]
where the inner sum goes over all chains of subsets
$\mathcal{L}_k:\emptyset=I_0\subset I_1\subset\cdots\subset
I_{k-1}\subset I_k=V$.

Define a character $\zeta:\mathcal{B}\rightarrow\mathbf{k}$ by
$\zeta(B)=1$ if $B$ is discrete and zero otherwise. This
determines the combinatorial Hopf algebra $(\mathcal{B},\zeta)$.

\begin{remark}
Another combinatorial Hopf algebra of building set $BSet$, which
is a Hopf subalgebra of the chromatic Hopf algebra of hypergraphs
is studied in \cite{GS}, \cite{GSJ}. As algebras $\mathcal{B}$ and
$BSet$ are the same but the coalgebra structures are different.
This is reflected in the fact that $BSet$ is cocommutative, in
opposite to $\mathcal{B}$.
\end{remark}
\begin{remark}
The algebra $\mathcal{B}$ has an additional structure of a
differential algebra introduced in \cite{B}. The derivation is
determined by
\[d(B)=\sum_{I\in B\setminus\{[n]\}}B\mid_I\cdot B/I\] for connected building set on
$[n]$ and extended by Leibniz law $d(B_1B_2)=d(B_1)B_2+B_1d(B_2)$.
\end{remark}

\begin{definition}\label{splittingchains}
Given a composition $\alpha=(a_1,\ldots,a_k)\models n$, we say
that the chain $\mathcal{L}:\emptyset=I_0\subset
I_1\subset\cdots\subset I_{k-1}\subset I_k=V$ is a {\it splitting
chain} of the type ${\rm type}(\mathcal{L})=\alpha$ of a building
set $B$ if $(B\mid_{I_j})/I_{j-1}$ is discrete and $|I_j\setminus
I_{j-1}|=a_j$ for all $1\leq j\leq k$. A splitting chain
$\mathcal{L}$ determines the weak order
$\preceq_\mathcal{L}=(I_1,I_2\setminus I_1,\ldots,I_k\setminus
I_{k-1})$ on $V$ of the same type.
\end{definition}

\begin{theorem}\label{universal}
For a connected building set $B$ the generating function $F(P_B)$
has the following expansion
\[F(P_B)=\sum_{\alpha\models n}\zeta_\alpha(B)M_\alpha,\]
where $\zeta_\alpha(B)$ is the total number of splitting chains of
the type $\alpha$.
\end{theorem}

\begin{proof}
We define a map $g:\Lambda\rightarrow \mathrm{Vertices}(P_B)$ from
the set $\Lambda$ of splitting chains of $B$ to the set of
vertices of $P_B$.

Let $\mathcal{L}:\emptyset=I_0\subset I_1\subset\cdots\subset
I_{k-1}\subset I_k=V=[n]$ be a splitting chain of $B$. Define the
level of a vertex $i\in V$ by $l(i)=j$ if $i\in I_j\setminus
I_{j-1}$ and a map $S:V\rightarrow B$ by $S(i)=\max\{J\in
B\mid_{I_{l(i)}}\mid i\in J\}, i\in V$. The map $S$ is well
defined and $S(i)\setminus\{i\}\subset I_{l(i)-1}$ since
$(B\mid_{I_{l(i)}})/I_{l(i)-1}$ is discrete. Particulary
$S(i)=\{i\}$ for each $i\in V$ such that $l(i)=1$ and $S(i)=V$ for
the unique $i\in V$. Let $N(\mathcal{L})=\{S(i)\mid i\in
V\}\setminus\{V\}$. We check the conditions of Proposition
\ref{nested} to show that the collection $N(\mathcal{L})$ is a
maximal nested set.

\begin{itemize}

\item[(N1)] Suppose that $S(i)\cap S(j)\neq\emptyset$ for some
$i,j\in V$. It implies that $S(i)\cup S(j)\in B$. If $l=l(i)=l(j)$
then $\{i,j\}\in (B\mid_{I_{l}})/I_{l-1}$ which contradicts the
condition that $(B\mid_{I_{l}})/I_{l-1}$ is discrete. If
$l(j)<l(i)$ then $i\in S(i)\cup S(j)\in B$ which implies
$S(j)\subset S(i)$ by definition of $S(i)$.

\item[(N2)] For a collection $S(i_1),\ldots,S(i_p)$ such that
$S=S(i_1)\cup\ldots\cup S(i_p)\in B$ we have by definition that
$S=S(i_j)$ for a vertex $i_j\in V$ with the maximal level
$l=\max\{l(i_1),\ldots,l(i_p)\}$. Thus $S(i_1),\ldots,S(i_p)$ can
not be a pairwise disjoint collection if $p>1$.
\end{itemize}

The map $g$ is given by $g(\mathcal{L})=v$ if
$N(\mathcal{L})=N_v$. It is well defined since vertices of $P_B$
and maximal nested sets are in one-to-one correspondence. We show
the following identity

$$\sum_{f\in\sigma_v}\mathbf{x}_f=\sum_{\mathcal{L}:g(\mathcal{L})=v}M_{{\rm
type}(\mathcal{L})}.$$

Let $\mathcal{L}$ be a splitting chain such that
$g(\mathcal{L})=v$. Then the associated level function satisfies
$l(i)<l(j)$, for each $S(i)\subset S(j)\ \mbox{in} \
N(\mathcal{L})$. By Proposition \ref{cones} (ii) we have
$l\in\sigma_v$ which shows that the monomial quasisymmetric
function $M_{{\rm type}(\mathcal{L})}$ is a summand of
$\sum_{f\in\sigma_v}\mathbf{x}_f$.

On the other hand, for $f\in\sigma_v$ with the set of values
$i_1<\cdots<i_k$, let $\mathcal{L}_f:\emptyset\subset
I_1\subset\cdots\subset I_k$ be a chain, where $I_j=\{i\in V\mid
f(i)\leq i_j\}, 1\leq j\leq k$. We can convince that the chain
$\mathcal{L}_f$ is a splitting chain of $B$ such that
$N(\mathcal{L}_f)=N_v$, which implies that the monomial
$\mathbf{x}_f$ appears in $M_{\mathrm{type}(\mathcal{L}_f)}$.


Finally, as

\[F(P_B)=\sum_{v\in P_B}\sum_{f\in\sigma_v}\mathbf{x}_f=\sum_{v\in
P_B}\sum_{\mathcal{L}\in g^{-1}(v)}M_{{\rm
type}(\mathcal{L})}=\sum_{\alpha\models
n}\zeta_\alpha(B)M_\alpha,\] the theorem is proved.
\end{proof}

\begin{remark}
Definition \ref{splittingchains} of splitting chains is borrowed
from \cite{BJR} where it appears in the context of matroids.
Theorem \ref{universal} is analogous to \cite[Proposition
3.3]{BJR}.
\end{remark}

The character $\zeta_Q:QSym\rightarrow\mathbf{k}$, defined on the
monomial basis by $\zeta_Q(M_\alpha)=1$ for either $\alpha=()$ or
$\alpha=(n)$ and zero otherwise, turns the Hopf algebra of
quasisymmetric functions $QSym$ into the terminal object in the
category of combinatorial Hopf algebras over a field $\mathbf{k}$
\cite[Theorem 4.1]{ABS}. This means that for each combinatorial
Hopf algebra $(\mathcal{H},\zeta)$ there is a unique morphism of
combinatorial Hopf algebras
$\Psi:(\mathcal{H},\zeta)\rightarrow(QSym,\zeta_Q)$. The explicit
definition of this morphism implies the following corollary of
Theorem \ref{universal}.

\begin{corollary}\label{main}
The map $F:\mathcal{B}\rightarrow QSym$, defined by $F(B)=F(P_B)$,
is a morphism of combinatorial Hopf algebras.
\end{corollary}

\begin{proof}
Let $p_j:\mathcal{B}\rightarrow\mathcal{B}_j$ be the projection on
the homogeneous part of degree $j$. The morphism
$\Psi:(\mathcal{B},\zeta)\rightarrow(Qsym,\zeta_Q)$ is defined by
\[\Psi(B)=\sum_{\alpha\models n}p_\alpha(B)M_\alpha,\] where
$p_\alpha=p_{(a_1,\ldots,a_k)}=p_{a_1}\ast\ldots\ast
p_{a_k}=m^{(k-1)}\circ(p_{a_1}\otimes\ldots\otimes
p_{a_k})\circ\Delta^{(k-1)}$ is the convolution product of
projections. It is straightforward to convince that
$p_\alpha(B)=\zeta_\alpha(B)$ for any composition $\alpha\models
n$, so the morphism $\Psi$ coincides with the map $F$.
\end{proof}

As a consequence we obtain the following identities for the
function $F$:

\[F(P_{B_1}\times P_{B_2})=F(P_{B_1})F(P_{B_2}),\]
\[\Delta(F(P_B))=\sum_{I\subset V}F(P_{B\mid_I})\otimes
F(P_{B/I}).\]

\begin{remark}
The function $F(P_B)$ is not a combinatorial invariant of
nestohedra. For example, the building sets
$B_1=\{1,2,3,4,12,123\}$ and $B_2=\{1,2,3,4,12,34\}$ on the four
element set $V=[4]$ have $P_{B_1}$ and $P_{B_2}$ combinatorially
equivalent to the $3$-cube, but $F(B_1)\neq F(B_2)$.
\end{remark}

\section{Unlabeled rooted trees}

Let $T$ be an unlabeled rooted tree on the set of vertices
$V=\{v_1,\ldots,v_n\}$. It defines a poset $(V,\leq_T)$ with
$v_i\leq v_j$ if and only if $v_j$ is the node on the unique path
from $v_i$ to the root. We do not make a difference between the
rooted tree $T$ and the corresponding Hasse diagram of the poset
$(V,\leq_T)$.

\begin{remark}
Let $\mathcal{T}_n$ be the set of all unlabeled rooted trees on
$n$ nodes and $r(n)$ be the total number of elements of
$\mathcal{T}_n$. In Neil Sloan's OEIS the sequence
$\{r(n)\}_{n\in\mathbb{N}}$ is numerated by A000081.
\end{remark}

We need some basic notions from Stanley's theory of
$P$-partitions. A detailed survey of the theory can be found in
\cite{S}, \cite{GR}. Let $f:T\rightarrow\mathbb{N}$ be a function
on vertices of a rooted tree $T$. We call it {\it natural
$T$-partition} if $f(v_i)\leq f(v_j)$ for $v_i\leq v_j$ in $T$ and
{\it strict $T$-partition} if in addition $f(v_i)<f(v_j)$ for any
pair of vertices with $v_i<v_j$ in $T$. Write $\mathcal{A}(T)$ for
the set of all natural $T$-partitions and $\mathcal{A}_0(T)$ for
its subset of strict $T$-partitions. Let $F(T)$ be the
quasisymmetric enumerator of strict $T$-partitions

$$F(T)=\sum_{f\in\mathcal{A}_0(T)}\mathbf{x}_f.$$


\begin{example}\label{fourtree}
There are four unlabeled rooted trees on four vertices whose
corresponding enumerators $F(T)$ in the monomial basis are given
by

\begin{tikzpicture}[scale=.5]
\node [left] at (0,0) {$F\Big($};\draw
(.5,-1.5)--(.5,-.5)--(.5,.5)--(.5,1.5);\draw[fill](.5,-1.5)circle(.1cm);
\draw[fill](.5,-.5)circle(.1cm);\draw[fill](.5,.5)circle(.1cm);
\draw[fill](.5,1.5)circle(.1cm); \node[right]at
(1,0){\small$\Big)=M_{(1,1,1,1)}$,};\node[left] at
(6.5,0){$F\Big($};

\draw(6.5,-.5)--(7.5,.5)--(8.5,-.5);\draw(7.5,.5)--(7.5,-.5);\draw[fill](6.5,-.5)circle(.1cm);
\draw[fill](7.5,.5)circle(.1cm);\draw[fill](8.5,-.5)circle(.1cm);\draw[fill](7.5,-.5)circle(.1cm);
\node[right]at(8.5,0){\small$\Big)=6M_{(1,1,1,1)}+3M_{(2,1,1)}+3M_{(1,2,1)}+M_{(3,1)},$};

\end{tikzpicture}

\begin{tikzpicture}[scale=.5]

\node[left]at (0,0){$F\Big($};
\draw(.5,-1)--(.5,0)--(1,1)--(1.5,0);\draw[fill](.5,-1)circle(.1cm);
\draw[fill](.5,0)circle(.1cm);\draw[fill](1,1)circle(.1cm);
\draw[fill](1.5,0)circle(.1cm);\node[right]at(2,0){\small$\Big)=3M_{(1,1,1,1)}+M_{(2,1,1)}+M_{(1,2,1)}$,};
\node[left]at(14,0){$F\Big($};
\draw(14,-1)--(14.5,0)--(14.5,1);\draw(14.5,0)--(15,-1);\draw[fill](14,-1)circle(.1cm);
\draw[fill](14.5,0)circle(.1cm);\draw[fill](14.5,1)circle(.1cm);\draw[fill](15,-1)circle(.1cm);
\node[right]at(15,0){\small$\Big)=2M_{(1,1,1,1)}+M_{(2,1,1)}$.};

\end{tikzpicture}

\end{example}

The quasisymmetric function $F(T)$ can be determined recursively.
To each vertex $v\in V$ define $T_{\leq v}$ as the complete
subtree on the set $\{u\in V\mid u\leq v\}$ of predecessors of
$v$. The leaf is a vertex $v\in V$ for which $T_{\leq v}=\{v\}$.
For a rooted forest $T=\sqcup_{i=1}^{k} T_i$ which is a finite
collection of rooted trees we extend multiplicatively

$$F(\sqcup_{i=1}^{k} T_i)=F(T_1)\cdots F(T_k).$$

\begin{definition}\label{shift}
A shifting operator $F\mapsto(F)_1$ on quasisymmetric functions is
the linear extension of the map defined on the monomial basis by
$(M_\alpha)_1=M_{(\alpha,1)},$ for each composition $\alpha$.
\end{definition}

\begin{theorem}\label{tree}
Given an unlabeled rooted tree $T$ on the set of vertices $V$ with
the root $v_0\in V$. Let $T_1,\ldots,T_k$ be connected components
of the forest $T\setminus\{v_0\}$. Then

$$F(T)=(\prod_{i=1}^{k}F(T_i))_1=F(T\setminus\{v_0\})_1.$$

\end{theorem}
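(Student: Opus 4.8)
The plan is to rewrite $F(T)$ as a sum of monomial quasisymmetric functions indexed by the weak orders on $V$ that are compatible with $T$, and then to peel off the root. First I would group the $T$-partitions $f$ according to the weak order $\precsim_f$ they induce, as in Section 3: writing the distinct values of $f$ as $j_1<\cdots<j_m$ and $V_l=f^{-1}(j_l)$, the monomial $\mathbf{x}_f$ equals $x_{j_1}^{|V_1|}\cdots x_{j_m}^{|V_m|}$, a term of $M_{(|V_1|,\ldots,|V_m|)}$, and conversely every term of such a monomial comes from a unique $T$-partition inducing the ordered partition $(V_1,\ldots,V_m)$. The defining strict inequalities of a $T$-partition say exactly that whenever $u\to w$ is an edge of $T$ the block of $u$ precedes that of $w$; since $f$ is then strictly order-preserving on $(V,\leq_T)$, each block $V_l$ is an antichain. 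Hence
$$F(T)=\sum_{(V_1,\ldots,V_m)}M_{(|V_1|,\ldots,|V_m|)},$$
the sum running over all ordered set partitions of $V$ compatible with $T$ in this sense.

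Next I would use that the root $v_0$ is the unique maximal element of $(V,\leq_T)$: every vertex lies below $v_0$, so any $T$-partition satisfies $f(v)<f(v_0)$ for all $v\neq v_0$. Consequently the top block of every compatible ordered set partition is the singleton $\{v_0\}$, and every composition occurring in the expansion of $F(T)$ ends in $1$. Deleting this top block gives a map from the compatible ordered set partitions of $V$ for $T$ to the ordered set partitions $(V_1,\ldots,V_{m-1})$ of $V\setminus\{v_0\}$. The key step is to check that this is a bijection onto the ordered set partitions compatible with the forest $T\setminus\{v_0\}$: the edge constraints among the non-root vertices are identical in $T$ and in $T\setminus\{v_0\}$, and appending $\{v_0\}$ as a new top block to any forest-compatible ordered partition automatically respects every edge incident to $v_0$, since the children of $v_0$ necessarily lie in earlier blocks.

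Finally I would translate this bijection through the monomial expansion. Removing the top singleton block replaces the composition $(|V_1|,\ldots,|V_{m-1}|,1)$ by $(|V_1|,\ldots,|V_{m-1}|)$, so by Definition \ref{shift} and linearity of the shifting operator,
$$F(T)=\sum_{(V_1,\ldots,V_{m-1})}M_{(|V_1|,\ldots,|V_{m-1}|,1)}=\Big(\sum_{(V_1,\ldots,V_{m-1})}M_{(|V_1|,\ldots,|V_{m-1}|)}\Big)_1=\big(F(T\setminus\{v_0\})\big)_1,$$
where the middle sum is precisely the expansion of $F(T\setminus\{v_0\})$ obtained by applying the first step to the forest. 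The identity $F(T\setminus\{v_0\})=\prod_{i=1}^k F(T_i)$ is the multiplicative definition of the forest enumerator, which yields the stated formula. I expect the only real subtlety to be the careful verification that the two notions of compatible ordered set partition, for $T$ and for the forest $T\setminus\{v_0\}$, match under appending the block $\{v_0\}$; everything else is bookkeeping with the monomial basis and the linear operator $(\cdot)_1$.
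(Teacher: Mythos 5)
Your proposal is correct and follows essentially the same route as the paper: the paper's (very terse) proof simply observes that $f$ is a $T$-partition iff its restrictions to the $T_i$ are $T_i$-partitions and $f(v)<f(v_0)$ for all $v\neq v_0$, which is exactly your observation that the top block of every compatible ordered set partition is the singleton $\{v_0\}$. Your version just spells out, via the monomial expansion over ordered set partitions, the bookkeeping that turns this decomposition into the shift operator $(\cdot)_1$, which the paper leaves implicit.
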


\begin{proof}
Denote by $v_1,\ldots,v_k$ the neighbors in $T$ of the root $v_0$.
Then $T_i=T_{\leq v_i}$ for $i=1,\ldots,k$. A function
$f:T\rightarrow\mathbb{N}$ is a $T$-partition if and only if its
restrictions $f\mid_{T_i}:T_i\rightarrow\mathbb{N}$ are
$T_i$-partitions for all $i=1,\ldots,k$ and $f(v)<f(v_0)$ for each
$v\neq v_0$.
\end{proof}

Each $T$-partition $f:T\rightarrow\mathbb{N}$ takes the maximal
value at the root of $T$. Therefore each monomial function
$M_\alpha$ in the expansion of $F(T)$ in the monomial basis is
indexed by the composition $\alpha$ whose last component is $1$.
Since $r(n)>2^{n-2}={\rm dim}(QSym_{n-1})$ for $n>4$, we proved
the following

\begin{proposition}
The quasisymmetric functions $\{F(T)\}_{T\in\mathcal{T}_n}$ are
linearly dependent for each $n>4$.
\end{proposition}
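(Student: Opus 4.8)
The plan is to turn the two facts recorded just before the statement into a dimension count. Every $F(T)$, $T\in\mathcal{T}_n$, lies in the subspace
\[
W=\mathrm{span}\{M_\alpha\mid \alpha\models n,\ \text{the last part of }\alpha\text{ equals }1\}\subset QSym_n,
\]
so I would first show that $\dim W=2^{n-2}$ and then that the number $r(n)=|\mathcal{T}_n|$ of summands exceeds this dimension whenever $n>4$. Any family of more than $\dim W$ vectors of $W$ is linearly dependent, which is exactly the assertion.

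For the inclusion $\{F(T)\}_{T\in\mathcal{T}_n}\subset W$ I would use Theorem~\ref{tree}: since $F(T)=\bigl(F(T\setminus\{v_0\})\bigr)_1$ and the shifting operator of Definition~\ref{shift} sends $M_\beta$ to $M_{(\beta,1)}$, each $F(T)$ lies in the image of $(\cdot)_1$, which is precisely $W$. (Equivalently, as already observed above, a $T$-partition is maximized at the root, forcing the last part of every composition occurring in $F(T)$ to be $1$.) To compute $\dim W$ I would note that deleting the terminal $1$ is a bijection between compositions of $n$ with last part $1$ and all compositions of $n-1$; hence the monomials spanning $W$ are indexed by compositions of $n-1$, of which there are $2^{n-2}$, and being part of the monomial basis they are linearly independent. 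Thus $\dim W=2^{n-2}=\dim QSym_{n-1}$, and in fact $(\cdot)_1\colon QSym_{n-1}\to W$ is an isomorphism.

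It remains to compare $r(n)$ with $2^{n-2}$, and this inequality is the only real content of the proof. I would establish $r(n)>2^{n-2}$ for all $n>4$ by induction using the elementary lower bound $r(n)\ge 2\,r(n-1)$ for $n\ge 3$. This bound follows from two injective constructions producing rooted trees on $n$ nodes from those on $n-1$ nodes: (A) attach one new leaf to the root, and (B) create a new root whose unique child is the old root. Construction (B) is injective since the old tree is recovered as the subtree below the unique child, and (A) is injective by cancellativity of multiset union applied to the multisets of subtrees hanging from the root. For $n\ge 3$ the two images are disjoint, because in every tree of type (B) the root has exactly one child whereas in every tree of type (A) the root has at least two children. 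Together with the base case $r(5)=9>8=2^{3}$, induction via $r(n)\ge 2\,r(n-1)$ then yields $r(n)>2^{n-2}$ for all $n\ge 5$, and combining this with $\dim W=2^{n-2}$ forces the $r(n)$ functions $F(T)$ to be linearly dependent.

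The main obstacle is exactly the verification of the numerical inequality $r(n)>2^{n-2}$; once a valid lower bound recurrence for A000081 is in hand, the rest is bookkeeping. The delicate point there is checking that constructions (A) and (B) genuinely have disjoint images (so that they really contribute the factor $2$) rather than coinciding on small or degenerate trees, which is why the root-degree comparison above is needed. Alternatively one may bypass the combinatorial estimate and invoke the asymptotics $r(n)\sim C\,\alpha^{n}n^{-3/2}$ with Otter's constant $\alpha\approx 2.9557>2$, which gives the inequality for all large $n$ and leaves only finitely many base cases to check by hand.
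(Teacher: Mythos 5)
Your proof is correct and follows essentially the same route as the paper: both observe that every $F(T)$ lies in the $2^{n-2}$-dimensional span of the $M_\alpha$ with last part equal to $1$ (because a $T$-partition attains its maximum at the root), and then compare with $r(n)$. The only difference is that the paper simply asserts the inequality $r(n)>2^{n-2}$ for $n>4$, whereas you supply a clean proof of it via the bound $r(n)\geq 2\,r(n-1)$; your two injections with disjoint images are verified correctly, so this is a welcome completion of a detail the paper leaves out.
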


\begin{example}\label{dependence}
We have $r(5)=9$ and ${\rm dim}(QSym_4)_1=8$. The unique linear
dependence relation is given by

\begin{tikzpicture}[scale=.5]
\node [left] at (-1,0) {$F\Big($}; \draw (-1,-1)--(-.5,0)--(0,1);
\draw (-.5,0)--(0,-1); \draw (0,1)--(.5,0); \draw [fill] (-1,-1)
circle (.1cm); \draw [fill] (-.5,0) circle (.1cm); \draw [fill]
(0,1) circle (.1cm);\draw [fill] (.5,0) circle (.1cm);\draw [fill]
(0,-1) circle (.1cm);\node [right] at (1,0){$\Big)+2F\Big($};\draw
(4.5,-1.5)--(4.5,-.5)--(5,.5)--(5,1.5);\draw
(5,.5)--(5.5,-.5);\draw [fill] (4.5,-1.5) circle (.1cm);\draw
[fill] (4.5,-.5) circle (.1cm);\draw [fill] (5,.5) circle
(.1cm);\draw [fill] (5,1.5) circle (.1cm);\draw [fill] (5.5,-.5)
circle (.1cm);\node [right] at (6,0){$\Big)=F\Big($};\draw
(9,-1)--(10,0)--(10,1);\draw (10,0)--(10,-1);\draw
(10,0)--(11,-1);\draw [fill] (9,-1) circle (.1cm);\draw [fill]
(10,-1) circle (.1cm);\draw [fill] (11,-1) circle (.1cm);\draw
[fill] (10,0) circle (.1cm);\draw [fill] (10,1) circle
(.1cm);\node [right] at (11,0){$\Big)+F\Big($};\draw
(14,-1)--(14,0)--(14.5,1)--(15,0)--(15,-1);\draw [fill] (14,-1)
circle(.1cm);\draw [fill] (14,0) circle(.1cm);\draw [fill]
(14.5,1) circle(.1cm);\draw [fill] (15,0) circle(.1cm);\draw
[fill] (15,-1) circle(.1cm);\node[right] at
(15,0){\Big)+F\Big(};\draw
(17.5,-1.5)--(18,-.5)--(18,.5)--(18,1.5);\draw
(18,-.5)--(18.5,-1.5);\draw [fill] (17.5,-1.5) circle(.1cm);\draw
[fill] (18,-.5) circle(.1cm);\draw [fill] (18,.5)
circle(.1cm);\draw [fill] (18,1.5) circle(.1cm);\draw [fill]
(18.5,-1.5) circle(.1cm);\node[right] at (18.5,0) {\Big).};
\end{tikzpicture}
\end{example}


Given a connected building set $B$, recall that to each vertex
$v\in P_B$ is associated the labeled rooted tree $T_v$, called
$B$-tree, which is the Hasse diagram of the poset $P_v$. Denote by
$T^{\circ}_v$ the unlabeled rooted tree associated to a $B$-tree
$T_v$ by forgetting the labels. Let $T(B)=\{T^{\circ}_v\mid v\in
P_B\}$ be the multiset of the corresponding unlabeled rooted
trees. The following expansion is a special case of \cite[Theorem
9.2]{BJR} which holds for generalized permutohedra.

\begin{theorem}\label{ppart}
For a building set $B$ the quasisymmetric enumerator $F(P_B)$ is
the sum of $T$-partitions enumerators corresponding to vertices of
$P_B$

$$F(P_B)=\sum_{T\in T(B)}F(T).$$
\end{theorem}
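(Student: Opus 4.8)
The plan is to establish a bijection between the two combinatorial objects that are being enumerated on each side. The left-hand side $F(P_B)$ counts $Q$-generic functions $f$ (equivalently, lattice points in the interiors of maximal normal cones), while the right-hand side sums the $T$-partition enumerators $F(T_v)$ over all vertices $v \in P_B$. First I would recall from Proposition \ref{cones}(ii) that the normal cone $\sigma_v$ at a vertex $v$ is cut out precisely by the inequalities $x_{i_J} < x_{i_I}$ for all covering relations $J \lessdot I$ in $N_v$. Under the bijection $I \mapsto i_I$ between $N_v \cup \{[n]\}$ and $[n]$, these inequalities translate exactly into the defining condition for a $T_v$-partition: a function $g$ lies in $\sigma_v$ if and only if $g(i_J) < g(i_I)$ whenever $J \lessdot I$, which is the same as saying $g$ strictly increases along every oriented edge of the Hasse diagram $T_v$ of the poset $P_v$.

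The key step is therefore to match the enumeration $\sum_{f \in \sigma_v} \mathbf{x}_f$ with $F(T_v) = \sum_{f \in \mathcal{A}(T_v)} \mathbf{x}_f$ for each fixed vertex $v$. Since the poset $P_v$ is defined on $[n]$ by $i_I \leq_v i_J \iff I \subset J$, and its Hasse diagram $T_v$ is exactly the $B$-tree, a function $f : [n] \to \mathbb{N}$ is a $T_v$-partition precisely when it respects the strict order along covering edges — which is identical to membership in the open cone $\sigma_v$. Thus $\sum_{f \in \sigma_v}\mathbf{x}_f = F(T_v)$ as formal power series in the $x_i$, and both expand into the same monomial quasisymmetric functions indexed by the weak orders induced by such $f$.

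Finally I would assemble the global identity by summing over vertices:
\[
F(P_B) = \sum_{v \in P_B} \sum_{f \in \sigma_v} \mathbf{x}_f = \sum_{v \in P_B} F(T_v) = \sum_{T \in T(B)} F(T),
\]
where the first equality is the definition of $F(P_B)$ given in the introduction, the second is the per-vertex matching, and the last rewrites the indexed sum as a sum over the multiset $T(B) = \{T_v \mid v \in P_B\}$. The main obstacle, and the point deserving the most care, is verifying that the correspondence $I \mapsto i_I$ genuinely sends the cone-defining inequalities of Proposition \ref{cones}(ii) to the edge conditions of the $T_v$-partition, i.e.\ confirming that covering relations $J \lessdot I$ in the nested set $N_v$ correspond exactly to the covering relations $i_J \lessdot_{P_v} i_I$ in the poset and hence to oriented edges of $T_v$. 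Once this identification is nailed down, the interiors of the normal cones partition the lattice of $Q$-generic functions in exact parallel with the decomposition of functions according to which $B$-tree they are compatible with, and the theorem follows immediately.
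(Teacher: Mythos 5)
Your proposal is correct and follows essentially the same route as the paper: the paper's proof likewise reduces the theorem to the per-vertex identity $F(T_v)=\sum_{f\in\sigma_v}\mathbf{x}_f$, which it deduces directly from the description of the normal cone in Proposition \ref{cones}(ii). You simply spell out in more detail the identification of the cone-defining inequalities with the edge conditions of a $T_v$-partition, which is exactly the point the paper's one-line proof invokes.
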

\begin{proof}
It is sufficient to show the identity
$F(T^{\circ}_v)=\sum_{f\in\sigma_v}\mathbf{x}_f$ which follows
from the description of the normal cone $\sigma_v$ at a vertex
$v\in P_B$, see Proposition \ref{cones} (ii).
\end{proof}

\begin{corollary}
The quasisymmetric function $F(P_B)$ depends only on the multiset
$T(B)$ of unlabeled rooted trees $T^{\circ}_v$ corresponding to
the vertices $v\in P_B$.
\end{corollary}

\begin{question}
In what extent the multiset $T(B)$ determines a building set $B$?
\end{question}

We say that a weak order $\preceq$ extends a partial order
$P=([n],\leq)$ and write $P\subset\preceq$ if $i<j$ implies
$i\prec j$ for each $i,j\in[n]$. Theorem \ref{ppart} implies the
following interpretation

$$F(P_B)=\sum_{v\in
P_v}\sum_{P_v\subset\preceq}M_{\mathrm{type}(\preceq)}.$$ Since
weak orders on the set $[n]$ and faces of the permutohedron
$Pe^{n-1}$ are in one-to-one correspondence, we associate to each
vertex $v\in P_B$ the collection of faces $F\subset Pe^{n-1}$ such
that the weak order $\preceq_F$ corresponding to a face $F$
extends the partial order $P_v$ corresponding to the vertex $v$.
This is exactly the collection of faces that collapses to the
vertex $v\in P_B$ by deforming the permutohedron $Pe^{n-1}$ to the
nestohedron $P_B$.


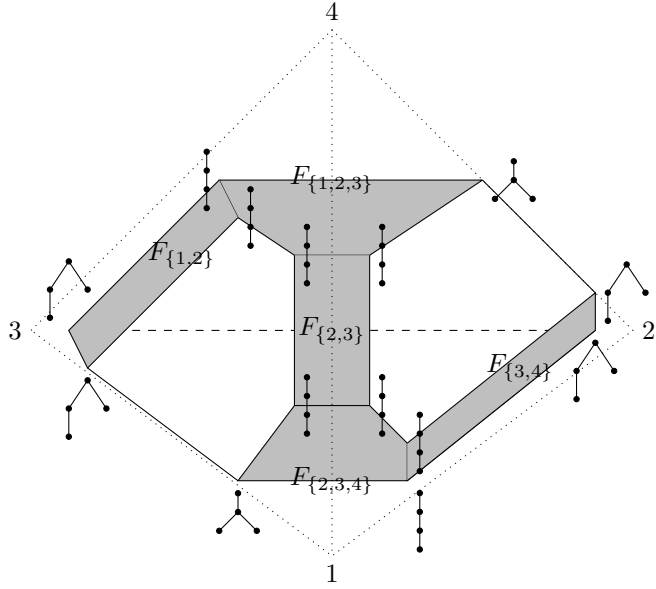
\begin{figure}[h!h!]\centering
\begin{tikzpicture}[scale=.5]
\draw[dotted](-8,0)--(0,8)--(8,0)--(0,-6)--(-8,0);
\draw[dashed](-7,0)--(7,0);
\node[left]at(-8,0){3};\node[right]at(8,0){2};
\node[below]at(0,-6){1};\node[above]at(0,8){4};
\draw[fill=lightgray](4,4)--(-3,4)--(-2.5,3)--(-1,2)--(1,2)--(4,4);
\draw[fill=lightgray](-3,4)--(-7,0)--(-6.5,-1)--(-2.5,3);\draw[fill=lightgray](-1,2)--(-1,-2)--(-2.5,-4)--(2,-4)--(2,-3)--(1,-2)--(1,2);
\draw(-1,-2)--(1,-2);\draw(2,-4)--(7,0);\draw[fill=lightgray](2,-3)--(7,1)--(7,0)--(2,-4);\draw(-6.5,-1)--(-2.5,-4);\draw(4,4)--(7,1);
\draw[dotted](0,8)--(0,-6);

\node[right]at(4,4){\begin{tikzpicture}[scale=.5]\draw(-.5,-.5)--(0,0)--(0,.5);\draw(0,0)--(.5,-.5);\draw[fill](-.5,-.5)circle(2pt);
\draw[fill](0,0)circle(2pt);\draw[fill](0,.5)circle(2pt);\draw[fill](.5,-.5)circle(2pt);\end{tikzpicture}};

\node[right]at(-1,2){\begin{tikzpicture}[scale=.5]\draw(0,-.75)--(0,-.25)--(0,.25)--(0,.75);\draw[fill](0,-.75)circle(2pt);
\draw[fill](0,-.25)circle(2pt);\draw[fill](0,.25)circle(2pt);\draw[fill](0,.75)circle(2pt);\end{tikzpicture}};

\node[right]at(1,2){\begin{tikzpicture}[scale=.5]\draw(0,-.75)--(0,-.25)--(0,.25)--(0,.75);\draw[fill](0,-.75)circle(2pt);
\draw[fill](0,-.25)circle(2pt);\draw[fill](0,.25)circle(2pt);\draw[fill](0,.75)circle(2pt);\end{tikzpicture}};

\node[right]at(1,-2){\begin{tikzpicture}[scale=.5]\draw(0,-.75)--(0,-.25)--(0,.25)--(0,.75);\draw[fill](0,-.75)circle(2pt);
\draw[fill](0,-.25)circle(2pt);\draw[fill](0,.25)circle(2pt);\draw[fill](0,.75)circle(2pt);\end{tikzpicture}};

\node[right]at(-1,-2){\begin{tikzpicture}[scale=.5]\draw(0,-.75)--(0,-.25)--(0,.25)--(0,.75);\draw[fill](0,-.75)circle(2pt);
\draw[fill](0,-.25)circle(2pt);\draw[fill](0,.25)circle(2pt);\draw[fill](0,.75)circle(2pt);\end{tikzpicture}};

\node[right]at(-2.5,3){\begin{tikzpicture}[scale=.5]\draw(0,-.75)--(0,-.25)--(0,.25)--(0,.75);\draw[fill](0,-.75)circle(2pt);
\draw[fill](0,-.25)circle(2pt);\draw[fill](0,.25)circle(2pt);\draw[fill](0,.75)circle(2pt);\end{tikzpicture}};

\node[right]
at(2,-3){\begin{tikzpicture}[scale=.5]\draw(0,-.75)--(0,-.25)--(0,.25)--(0,.75);\draw[fill](0,-.75)circle(2pt);
\draw[fill](0,-.25)circle(2pt);\draw[fill](0,.25)circle(2pt);\draw[fill](0,.75)circle(2pt);\end{tikzpicture}};

\node[below
right]at(2,-4){\begin{tikzpicture}[scale=.5]\draw(0,-.75)--(0,-.25)--(0,.25)--(0,.75);\draw[fill](0,-.75)circle(2pt);
\draw[fill](0,-.25)circle(2pt);\draw[fill](0,.25)circle(2pt);\draw[fill](0,.75)circle(2pt);\end{tikzpicture}};

\node[left]at(-3,4){\begin{tikzpicture}[scale=.5]\draw(0,-.75)--(0,-.25)--(0,.25)--(0,.75);\draw[fill](0,-.75)circle(2pt);
\draw[fill](0,-.25)circle(2pt);\draw[fill](0,.25)circle(2pt);\draw[fill](0,.75)circle(2pt);\end{tikzpicture}};

\node[below]at(-2.5,-4){\begin{tikzpicture}[scale=.5]\draw(-.5,-.5)--(0,0)--(0,.5);\draw(0,0)--(.5,-.5);\draw[fill](-.5,-.5)circle(2pt);
\draw[fill](0,0)circle(2pt);\draw[fill](0,.5)circle(2pt);\draw[fill](.5,-.5)circle(2pt);\end{tikzpicture}};

\node[below]at(-6.5,-1){\begin{tikzpicture}[scale=.5]\draw(.5,-.75)--(.5,0)--(1,.75)--(1.5,0);\draw[fill](.5,-.75)circle(2pt);
\draw[fill](.5,0)circle(2pt);\draw[fill](1,.75)circle(2pt);
\draw[fill](1.5,0)circle(2pt);\end{tikzpicture}};

\node[below]at(7,0){\begin{tikzpicture}[scale=.5]\draw(.5,-.75)--(.5,0)--(1,.75)--(1.5,0);\draw[fill](.5,-.75)circle(2pt);
\draw[fill](.5,0)circle(2pt);\draw[fill](1,.75)circle(2pt);
\draw[fill](1.5,0)circle(2pt);\end{tikzpicture}};

\node[right]at(7,1){\begin{tikzpicture}[scale=.5]\draw(.5,-.75)--(.5,0)--(1,.75)--(1.5,0);\draw[fill](.5,-.75)circle(2pt);
\draw[fill](.5,0)circle(2pt);\draw[fill](1,.75)circle(2pt);
\draw[fill](1.5,0)circle(2pt);\end{tikzpicture}};

\node[above]at(-7,0){\begin{tikzpicture}[scale=.5]\draw(.5,-.75)--(.5,0)--(1,.75)--(1.5,0);\draw[fill](.5,-.75)circle(2pt);
\draw[fill](.5,0)circle(2pt);\draw[fill](1,.75)circle(2pt);
\draw[fill](1.5,0)circle(2pt);\end{tikzpicture}};

\node at (0,4){$F_{\{1,2,3\}}$}; \node at(0,0){$F_{\{2,3\}}$};
\node at(0,-4){$F_{\{2,3,4\}}$}; \node at(-4,2){$F_{\{1,2\}}$};
\node at(5,-1){$F_{\{3,4\}}$};

\end{tikzpicture}
\caption{Associahedron $As^{3}$} \label{associahedron}
\end{figure}

\begin{example}\label{assoc}
The $3$-dimensional associahedron $As^{3}$ is realized as the
graph-asso\-ciahedron $P_{B(L_4)}$ corresponding to the path graph
$L_4$ on the set of vertices $[4]$. The determining building set
is $B(L_4)=\{1,2,3,4,12,23,34,123,234,1234\}$. To illustrate the
general construction we describe in more details how the unlabeled
rooted trees $T^{\circ}_v$ are associated to the vertices $v\in
As^{3}$, see Figure \ref{associahedron}. The construction depends
only on the combinatorial type of a nestohedron $P_B$. Therefore
we can start with a $3$-simplex $\Delta^{3}$ with the faces
$\Delta_I$ labeled by subsets $I\subset[4]$. Performing the
truncations along $\Delta_{[4]\setminus J}$ for $J\in
B(L_4)\setminus\{[4]\}$ in nondecreasing order of dimensions,
produces $As^{3}$ with the facets $F_J$. Each vertex is an
intersection of the form $v=F_{J_1}\cap F_{J_2}\cap F_{J_3}$,
where the collection $N_v=\{J_1,J_2,J_3\}$ is a maximal nested
set. The rooted tree $T_v$ is the Hasse diagram of the poset $P_v$
and $T^{\circ}_v$ is the corresponding unlabeled rooted tree.

By Theorem \ref{ppart} and Example \ref{fourtree} we find

\[F(As^{3})=4M_{(1,2,1)}+6M_{(2,1,1)}+24M_{(1,1,1,1)}.\]

\end{example}

\section{The action of the antipode on $F(P_B)$}

The action $F^{\ast}(Q)=S(F(Q))$ of the antipode $S$ of
quasisymmetric functions on the lattice points enumerator $F(Q)$
is determined for a general class of generalized permutohedra in
\cite[Theorem 9.2]{BJR}. We consider this formula for a special
class of nestohedra.

The formula for the antipode in the monomial basis are obtained
independently in \cite[Corollary 2.3]{MR}, \cite[Proposition
3.4]{E}, see also \cite[Theorem 5.11]{GR}. The antipode formula on
$P$-partition enumerators (see \cite[Theorem 3.1]{MR1} and
\cite[Corollary 5.27]{GR}) has a particularly nice interpretation
for unlabeled rooted trees. Let $T^{\mathrm{op}}$ be the unlabeled
rooted tree $T$ with the reverse order of vertices. Thus the root
is the minimal vertex in $T^{\mathrm{op}}$. Denote by
$\widehat{F}(T^{\mathrm{op}})$ the quasisymmetric enumerator of
natural $T^{\mathrm{op}}$-partitions

$$\widehat{F}(T^{\mathrm{op}})=\sum_{f\in\mathcal{A}(T^{\mathrm{op}})}\mathbf{x}_f.$$
Then the following formula holds

$$S(F(T))=(-1)^{n}\widehat{F}(T^{\mathrm{op}}).$$ For example $S\Big(F\Big($\begin{tikzpicture}[baseline={(current bounding box)},scale=.5]
\draw (-.5,-.5)--(0,.5)--(.5,-.5); \draw [fill] (-.5,-.5) circle
(.1cm); \draw [fill] (0,.5) circle (.1cm); \draw [fill] (.5,-.5)
circle
(.1cm);\end{tikzpicture}$\Big)\Big)=-\widehat{F}\Big($\begin{tikzpicture}[baseline={(current
bounding box)},scale=.5] \draw (4,.5)--(4.5,-.5)--(5,.5);\draw
[fill] (4,.5) circle (.1cm); \draw [fill] (4.5,-.5) circle (.1cm);
\draw [fill] (5,.5) circle (.1cm);
\end{tikzpicture}$\Big)$, where $F\Big($\begin{tikzpicture}[baseline={(current bounding box)},scale=.5]
\draw (-.5,-.5)--(0,.5)--(.5,-.5); \draw [fill] (-.5,-.5) circle
(.1cm); \draw [fill] (0,.5) circle (.1cm); \draw [fill] (.5,-.5)
circle (.1cm);\end{tikzpicture}$\Big)=2M_{(1,1,1)}+M_{(2,1)}$ and
$\widehat{F}\Big($\begin{tikzpicture}[baseline={(current bounding
box)},scale=.5] \draw (4,.5)--(4.5,-.5)--(5,.5);\draw [fill]
(4,.5) circle (.1cm); \draw [fill] (4.5,-.5) circle (.1cm); \draw
[fill] (5,.5) circle (.1cm);
\end{tikzpicture}$\Big)=2M_{(1,1,1)}+2M_{(2,1)}+M_{(1,2)}+M_{(3)}.$
Consequently Theorem \ref{ppart} implies

$$F^{\ast}(P_B)=(-1)^{n}\sum_{T\in
T(B)}\widehat{F}(T^{\mathrm{op}}).$$

\begin{example}
Let $As^{3}=P_{B(L_4)}$ as in Example \ref{assoc}. We find
$F^{\ast}(As^{3})=14M_{(4)}+14M_{(1,3)}+20M_{(3,1)}+18M_{(2,2)}+18M_{(1,1,2)}+20M_{(1,2,1)}+24M_{(2,1,1)}+24M_{(1,1,1,1)}$.
\end{example}

The following proposition, proved in the case of matroid base
polytopes \cite[Theorem 6.3]{BJR} and stated in \cite[Theorem
9.2]{BJR} for generalized permutohedra, gives a combinatorial
interpretation of the action of the antipode $S$ on $F(P_B)$.

\begin{proposition}\label{fstar}
The quasisymmetric function $F^{\ast}(P_B)=S(F(P_B))$ is the
enumerator function
$$F^{\ast}(P_B)=(-1)^{n}\sum_{f}c(f)\mathbf{x}_f,$$ where the sum is over
all $f:[n]\rightarrow\mathbb{N}$ and $c(f)=|\{v\in P_B\mid
\displaystyle{\min_{x\in P_B}}\langle f,x\rangle=\langle
f,v\rangle\}|$ is the total number of vertices $v\in P_B$ that
minimize a function $f$.
\end{proposition}
\begin{proof}

Let $\sigma_v^{\mathrm{op}}$ be the opposite cone to the normal
cone $\sigma_v$ at a vertex $v\in P_B$. By Proposition \ref{cones}
(ii) we have that the closure of the opposite cone
$\overline{\sigma_v^{\mathrm{op}}}$ is determined by inequalities

$$x_j\leq x_i, \
\mbox{for all} \ v_i<v_j \in T_v.$$ Therefore a function
$f:[n]\rightarrow\mathbb{N}$ is a natural
$T^{\mathrm{op}}$-partition $f\in\mathcal{A}(T^{\mathrm{op}})$ if
and only if it belongs to the closure of the opposite cone
$f\in\overline{\sigma_v^{\mathrm{op}}}$ at some vertex $v\in P_B$.
It means that


$$(-1)^{n}F^{\ast}(P_B)=\sum_{T\in
T(B)}\widehat{F}(T^{\mathrm{op}})=\sum_{v\in
P_B}\sum_{f\in\overline{\sigma_v^{\mathrm{op}}}}\mathbf{x}_f.$$ It
remains to note that $f\in\overline{\sigma_v^{\mathrm{op}}}$ if
and only if $f$ is minimized at the vertex $v$.
\end{proof}


For $F\in QSym$ let $\chi(F,m)=\mbox{ps}_m(F)$ be the principal
specialization defined by algebraic extension of
$\mbox{ps}_m(x_i)=1$ for $1\leq i\leq m$ and $\mbox{ps}_m(x_i)=0$
for $i>m$. Since $\mbox{ps}_m(M_\alpha)={m\choose k(\alpha)}$ we
have
\[\chi(P_B,m)=\sum_{\alpha\models n}\zeta_\alpha(B){m \choose
k(\alpha)},\]which counts the number of $P_B$-generic functions
$f:[n]\rightarrow[m]$. It is related with
$\chi^\ast(P_B,m)=\mbox{ps}_m( F^\ast(P_B))$ by

\[\chi(P_B,-m)=(-1)^n\chi^\ast(P_B,m).\] Specially, for $m=1$,
we obtain the following

\begin{proposition}\label{vert}
The number of vertices $f_0(P_B)$ of a nestohedron $P_B$ is
determined by
 $f_0(P_B)=(-1)^{n}\chi(P_B,-1)$.
\end{proposition}
\begin{proof}
The statement follows from the identity
$\mbox{ps}_1(F^{\ast}(P_B))=c_{(n)}$, where $c_{(n)}$ is the
coefficient by $M_{(n)}$ in the expansion of $F^{\ast}(P_B)$ in
the monomial basis. By Proposition \ref{fstar} this coefficient
counts the vertices of $P_B$ that minimize $\langle f,x\rangle$
over $P_B$ for $f=(1,\ldots,1)$. But $f$ is orthogonal to $P_B$.
\end{proof}

\section{The graph invariant $F(P_{B(\Gamma)})$}

In this section we investigate the quasisymmetric function
$F(P_{B(\Gamma)})$ associated to a simple graph $\Gamma$.

For a graph $\Gamma$ on the vertex set $[n]$ and a subset
$I\subset [n]$ are defined the restriction $\Gamma\mid_I$ and the
contraction $\Gamma/I$. The restriction $\Gamma\mid_I$ is the
induced subgraph on the vertex set $I$ and the contraction
$\Gamma/I$ is a graph on $[n]\setminus I$ with two vertices $u$
and $v$ connected by the edge if either $\{u,v\}$ is an edge of
$\Gamma$ or there is a path $u,w_1,\ldots,w_k,v$ in $\Gamma$ with
$w_1,\ldots,w_k\in I$. To a graph $\Gamma$ is associated the
graphical building set $B(\Gamma)$ as in Example \ref{graph}. It
is immediate that $B(\Gamma\mid_I)=B(\Gamma)\mid_I$ and
$B(\Gamma/I)=B(\Gamma)/I$. An element $I\in B(\Gamma)$ and the
corresponding contraction $\Gamma/I$ are called in \cite{CD} the
tube and the reconnected complement respectively.

The vector space $\mathcal{G}$ spanned by all isomorphism classes
of simple graphs is endowed with the Hopf algebra structure by
operations

\[\Gamma_1\cdot\Gamma_2=\Gamma_1\sqcup\Gamma_2 \ \mbox{and} \
\Delta(\Gamma)=\sum_{I\subset V}\Gamma\mid_I\otimes\Gamma/I.\] The
map that associates the graphical building set $B(\Gamma)$ to a
graph $\Gamma$ is extended to a Hopf algebra monomorphism
$i:\mathcal{G}\rightarrow\mathcal{B}$. The induced character is
defined by $\zeta(\Gamma)=1$ if $\Gamma$ is discrete and zero
otherwise. It follows from Corollary \ref{main} that the
quasisymmetric function $F(P_{B(\Gamma)})$ is a multiplicative
graph invariant. By Theorem \ref{universal} it may be defined
purely in a graph theoretic manner.

Let $\Gamma$ be a simple graph on $n$ vertices
$V=\{v_1,\cdots,v_n\}$ and $\lambda:V\rightarrow\mathbb{N}$ be a
coloring with the set of colors $\{i_1<\cdots<i_k\}$. Define a
flag $\emptyset=I_0\subset I_1\subset\cdots\subset I_{k-1}\subset
I_k=V$ by $I_j=\lambda^{-1}(\{i_1,\cdots,i_j\})$ for $1\leq j\leq
k$. We say that $\lambda$ is a {\it ordered coloring} of $\Gamma$
if the graphs $\Gamma|_{I_j}/I_{j-1}$ are discrete for all $1\leq
j\leq k$. This means that each monochromatic set of vertices is
discrete and no two vertices of the same color are connected by a
path trough vertices colored by smaller colors. The type of an
ordered coloring $\lambda$ is the composition
$\mathrm{co}(\lambda)=(a_1,\cdots,a_k)\models n$, where
$a_j=|I_j\setminus I_{j-1}|$ is the number of vertices colored by
$i_j$, for all $1\leq j\leq k$. Let $Col^{\leq}(\Gamma)$ be the
set of all ordered colorings of the graph $\Gamma$ and $F_\Gamma$
be the enumerator function

\[F_\Gamma=\sum_{\lambda\in Col^{\leq}(\Gamma)}\mathbf{x}_\lambda.\]
By Theorem \ref{universal} it coincides with the quasisymmetric
function of the graph-associa\-hedra $B(\Gamma)$
$$F_\Gamma=F(P_{B(\Gamma)}).$$ Thus in the monomial basis it has the expansion
$F_\Gamma=\sum_{\alpha\models n}\zeta_\alpha(\Gamma)M_\alpha$,
where $\zeta_\alpha(\Gamma)$ is the number of ordered colorings
$\lambda:V\rightarrow\{1,\cdots,k(\alpha)\}$ of the type
$\mathrm{co}(\lambda)=\alpha$. The polynomial
$\chi(\Gamma,m)=\chi(B(\Gamma),m)$ counts the number of ordered
colorings with at most $m$ colors.

\begin{remark}

Stanley's chromatic symmetric function of a graph $X_\Gamma$
introduced in \cite{S1} is the enumerator of proper colorings
$\lambda:V(\Gamma)\rightarrow\mathbb{N}$. A coloring $\lambda$ is
proper if the graph $\Gamma$ does not contain a monochromatic
edge, i.e. the induced graph on $\lambda^{-1}(\{i\})$ for each
color $i\in\mathbb{N}$ is discrete. The sizes of monochromatic
parts define the type of the proper coloring which is a partition
of the number of vertices of the graph since ordering of colors is
inessential. The assignment $X_\Gamma$ is the canonical morphism
from the chromatic Hopf algebra of graphs to symmetric functions,
see (\cite{ABS}, Example 4.5). The coefficients $c_\mu(\Gamma)$ in
the expansion in the monomial basis of symmetric functions

$$X_\Gamma=\sum_{\mu\vdash n}c_\mu(\Gamma)m_\mu,$$
count the numbers of proper colorings of prescribed types
$\mu\vdash n$. Recall that $m_\mu=\sum_{s(\alpha)=\mu}M_\alpha$,
where the sum is over all compositions $\alpha\models n$ that can
be rearranged to the partition $\mu\vdash n$.

\end{remark}

The coefficients $\zeta_\alpha(\Gamma), \alpha\models n$ satisfy
the following properties.


\begin{proposition}\label{coeff}

Given a graph $\Gamma$ on the set of vertices $V=[n]$.

\begin{itemize}

\item[(a)]The coefficients $\zeta_{(k,1^{n-k})}(\Gamma), 1\leq
k\leq n$ determine the $f-$vector of the independence complex
$Ind(\Gamma)$ of the graph $\Gamma$.



\item[(b)]For any pair $\alpha\preceq\beta$ it holds
$\zeta_\alpha(\Gamma)\leq\zeta_\beta(\Gamma)$.

\item[(c)]$\zeta_\alpha(\Gamma)\leq c_{\mu}(\Gamma)$ for each
composition $\alpha\models n$ such that $s(\alpha)=\mu\vdash n$
and $c_\mu(\Gamma)$ are the coefficients of $X_\Gamma$ in the
monomial basis $\{m_\mu\}_{\mu\vdash n}$ of symmetric functions.
\end{itemize}

\end{proposition}

\begin{proof}
Recall that the coefficient $\zeta_\alpha(\Gamma)$ counts the
number of ordered colorings $\lambda:V\rightarrow[k(\alpha)]$ of
the type $\alpha\models n$.

\begin{itemize}
\item[(a)]The only condition for a coloring
$\lambda:V\rightarrow[n-k+1]$ to be ordered with ${\rm
type}(\lambda)=(k,1^{n-k})$ is that the set of vertices colored by
$1$ is $k$-element and discrete. Hence
$\zeta_{(k,1^{n-k})}(\Gamma)=(n-k)!f_{k-1}(Ind(\Gamma))$.



\item[(b)]Suppose that $\alpha$ is obtained from $\beta$ by
combining some of its adjacent parts, i.e.
$\alpha=(a_1,\ldots,a_i,\ldots,a_k)$ and
$\beta=(a_1,\ldots,a_i^{'},a_i^{''},\ldots,a_k)$ with
$a_i=a_i^{'}+a_i^{''}$. Then any ordered coloring of the type
$\alpha$ defines at least ${a_i \choose a_i^{'}}$ ordered
colorings of the type $\beta$.

\item[(c)]It is obvious since any ordered coloring of a type
$\alpha\models n$ is the coloring of the type $s(\alpha)\vdash n$.
\end{itemize}
\end{proof}



\begin{remark}
The formula similar to Proposition \ref{coeff} $(a)$ for the
$f$-vectors of simplicial complexes is derived in \cite[Section
4.5]{BHM}.
\end{remark}

The following theorem allows one to define the invariant
$F_\Gamma$ recursively starting with $F_{\emptyset}=M_{()}=1$.
Recall that $F\mapsto(F)_1$ is the shifting operator, see
Definition \ref{shift}.

\begin{theorem}\label{recursive}
For a connected graph $\Gamma$ on the vertex set $[n]$ it holds

\[F_\Gamma=\sum_{i\in [n]}(F_{\Gamma\setminus\{i\}})_1.\]

\end{theorem}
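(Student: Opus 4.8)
The plan is to verify the identity coefficient-by-coefficient in the monomial basis. Since $\Gamma$ is connected, the case $q=1$ of Theorem~\ref{coeff}(b) forces $\zeta_\alpha(\Gamma)=0$ whenever the last part $a_{k(\alpha)}$ exceeds $1$, so only compositions ending in $1$ survive and
$$F_\Gamma=\sum_{\beta\models n-1}\zeta_{(\beta,1)}(\Gamma)\,M_{(\beta,1)}.$$
On the other hand, expanding each $F_{\Gamma\setminus\{i\}}=\sum_{\beta\models n-1}\zeta_\beta(\Gamma\setminus\{i\})M_\beta$ and applying the shifting operator by linearity via $(M_\beta)_1=M_{(\beta,1)}$ gives
$$\sum_{i\in[n]}(F_{\Gamma\setminus\{i\}})_1=\sum_{\beta\models n-1}\Big(\sum_{i\in[n]}\zeta_\beta(\Gamma\setminus\{i\})\Big)M_{(\beta,1)}.$$
Matching coefficients of $M_{(\beta,1)}$, the theorem reduces to the single identity $\zeta_{(\beta,1)}(\Gamma)=\sum_{i\in[n]}\zeta_\beta(\Gamma\setminus\{i\})$ for every $\beta\models n-1$.

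To establish this identity I would construct a bijection between ordered colorings of $\Gamma$ of type $(\beta,1)$ and pairs $(i,\mu)$ consisting of a vertex $i\in[n]$ together with an ordered coloring $\mu$ of $\Gamma\setminus\{i\}$ of type $\beta$. Given an ordered coloring $\lambda\colon V\to[k]$ of type $(\beta,1)$, where $k=k(\beta)+1$, the top color $k$ is used by a single vertex $i$; send $\lambda$ to the pair $(i,\lambda|_{V\setminus\{i\}})$. Conversely, from a pair $(i,\mu)$ build the coloring of $\Gamma$ that agrees with $\mu$ off $i$ and assigns the fresh top color to $i$. These two assignments are visibly inverse to one another and respect the prescribed types, so it remains only to check that each sends ordered colorings to ordered colorings.

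The heart of the argument, and the only point requiring care, is this preservation of the ordered-coloring condition. Recall the condition involves the flag $I_0\subset I_1\subset\cdots$ with $I_j=\lambda^{-1}(\{1,\ldots,j\})$ and the requirement that each $\Gamma|_{I_j}/I_{j-1}$ be discrete. Because the distinguished vertex $i$ carries the strictly largest color, it lies in no $I_j$ with $j\le k-1$; since then $I_j\subseteq I_{k-1}=V\setminus\{i\}$, we have $(\Gamma\setminus\{i\})|_{I_j}=\Gamma|_{I_j}$ and the corresponding contractions coincide for all $j\le k-1$, so the defining conditions at levels $1,\ldots,k-1$ for $\lambda$ and for $\lambda|_{V\setminus\{i\}}$ are literally identical. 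The remaining top-level condition for $\lambda$ is that $\Gamma|_{I_k}/I_{k-1}=\Gamma/(V\setminus\{i\})$ be discrete, which holds trivially as it is a graph on the single vertex $i$. Hence $\lambda$ is ordered precisely when $\lambda|_{V\setminus\{i\}}$ is, which completes the bijection and thereby the coefficient identity and the theorem.
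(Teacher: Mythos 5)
Your proof is correct, but it follows a genuinely different route from the paper's. The paper argues geometrically: it partitions the vertices of the graph-associahedron $P_{B(\Gamma)}$ according to the root $i$ of the associated $B$-tree $T_v$, invokes Theorem~\ref{ppart} to write $F_\Gamma=\sum_i\sum_{T\in T(B(\Gamma))_i}F(T)$, and then applies the $T$-partition recursion $F(T)=(F(T\setminus\{\mathrm{root}\}))_1$ of Theorem~\ref{tree}, identifying the resulting forests with the $B(\Gamma\setminus\{i\})$-trees. You instead work entirely in the ordered-coloring model of Section~7: you use connectivity (Theorem~\ref{coeff}(b) with $q=1$) to restrict to compositions ending in $1$, reduce the theorem to the coefficient identity $\zeta_{(\beta,1)}(\Gamma)=\sum_{i}\zeta_\beta(\Gamma\setminus\{i\})$, and prove it by the bijection that strips the unique top-colored vertex; your verification that the levels $1,\dots,k-1$ of the flag are unaffected and that the top level is vacuous is exactly the point that needs care, and you handle it correctly. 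Your approach is more elementary and self-contained (it needs only Proposition~\ref{universal} and the definition of ordered colorings, not the $P$-partition machinery or the correspondence between vertices and $B$-trees), and it yields the refined coefficient identity explicitly; the paper's approach, in exchange, exhibits the recursion as a geometric statement about roots of $B$-trees and ties it to the $T$-partition enumerators used throughout Sections~5 and~6.
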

\begin{proof}
We arrange the vertices $v\in P_{B(\Gamma)}$ according to the
maximal elements of corresponding posets $P_v$. Let
$T(B(\Gamma))_i=\{T_v^{\circ}\mid v\in P_{B(\Gamma)}, \max
P_v=i\}$ be the multiset of specified unlabeled $B(\Gamma)$-trees.
Then by Theorem \ref{ppart} we have

\[F_\Gamma=\sum_{i\in[n]}\sum_{T\in T(B(\Gamma))_i}F(T).\] Denote by
$v_T$ the root of a tree $T$. The formula follows from the
recurrence formula for $T$-partitions enumerators, see Theorem
\ref{tree}

\[\sum_{T\in T(B(\Gamma))_i}F(T)=\sum_{T\in T(B(\Gamma))_i}(F(T\setminus\{v_T\}))_1=(F_{\Gamma\setminus\{i\}})_1.\]

\end{proof}

Theorem \ref{recursive} provides an effective computational tool
for the invariant $F_\Gamma$.

\begin{example}
The invariant $F_\Gamma$ distinguishes five-vertex graphs. In
particular, the unique pair of five-vertex graphs with the same
chromatic symmetric functions $X_\Gamma$ given in \cite[Figure
1]{S1} are distinguished by $F_\Gamma$. Figure \ref{same}
\footnote{The author is thankful to Marko Pe\v{s}ovi\'c for this
example who discovered it by using MathLab program.} shows a pair
of non-isomorphic six-vertex graphs with
$F_{\Gamma_1}=F_{\Gamma_2}=24M_{(1,2,1,1,1)}+96M_{(2,1,1,1,1)}+720M_{(1,1,1,1,1,1)}$.
On the other hand the chromatic numbers of graphs $\Gamma_1$ and
$\Gamma_2$ are different $\chi(\Gamma_1)=3$ and
$\chi(\Gamma_2)=4$. Since the chromatic number $\chi(\Gamma)$ can
be derived from Stanley's chromatic symmetric function $X_\Gamma$
we conclude that graph invariants $X_\Gamma$ and $F_\Gamma$ are
not comparable.

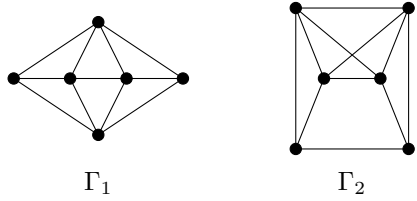
\begin{figure}[h!h!]\centering
\begin{tikzpicture}[scale=.75]
\draw
(-1.5,0)--(-.5,0)--(.5,0)--(1.5,0)--(0,1)--(-1.5,0)--(0,-1)--(1.5,0);
\draw (0,1)--(-.5,0)--(0,-1)--(.5,0)--(0,1); \draw [fill] (-1.5,0)
circle(.1cm);\draw [fill] (-.5,0) circle(.1cm);\draw [fill] (.5,0)
circle(.1cm);\draw [fill] (1.5,0) circle(.1cm);\draw [fill] (0,-1)
circle(.1cm);\draw [fill] (0,1) circle(.1cm);\node[below] at
(0,-1.5) {$\Gamma_1$};

\draw
(3.5,1.25)--(5.5,1.25)--(5.5,-1.25)--(3.5,-1.25)--(3.5,1.25)--(4,0)--(5,0)--(5.5,1.25)--(4,0);
\draw (3.5,1.25)--(5,0)--(5.5,-1.25);\draw (3.5,-1.25)--(4,0);
\draw [fill] (3.5,-1.25) circle(.1cm);\draw [fill] (3.5,1.25)
circle(.1cm);\draw [fill] (5.5,1.25) circle(.1cm);\draw [fill]
(5.5,-1.25) circle(.1cm);\draw [fill] (4,0) circle(.1cm);\draw
[fill] (5,0) circle(.1cm);\node[below] at (4.5,-1.5) {$\Gamma_2$};

\end{tikzpicture}
\caption{Graphs with $F_{\Gamma_1}=F_{\Gamma_2}$} \label{same}
\end{figure}

\noindent Note that $\Gamma_1=\overline{L_2L_4}$ and
$\Gamma_2=\overline{L_3L_3}$, where $\overline{\Gamma}$ denotes
the complement graph of $\Gamma$.
\end{example}

We obtain the recurrence relations satisfied by enumerators $F(Q)$
for $Q=Pe^{n-1},$ $As^{n-1}, Cy^{n-1}, St^{n-1}.$ We assume the
realization of $Q$ as a graph-associahedron of the corresponding
graph as in Example \ref{graph}. By convention the only
$(-1)$-dimensional polytope is $\emptyset$.

\begin{corollary}\label{correcursive}

For $n\geq 1$ the following recurrence relations hold

\[F(Pe^{n-1})=n(F(Pe^{n-2}))_1,\]
\[F(As^{n-1})=(\sum_{k=1}^{n}F(As^{k-2})F(As^{n-k-1}))_1,\]
\[F(Cy^{n-1})=n(F(As^{n-2}))_1,\]
\[F(St^{n-1})=((n-1)F(St^{n-2})+M_{(1)}^{n-1})_1.\]

\end{corollary}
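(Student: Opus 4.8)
The plan is to apply the deletion recurrence of Theorem~\ref{recursive} to each of the four graphs $K_n$, $L_n$, $C_n$, $K_{1,n-1}$ from Example~\ref{graph}, using only two auxiliary facts: that $F$ is multiplicative over disjoint unions of graphs (equivalently $F_{\Gamma_1\sqcup\Gamma_2}=F_{\Gamma_1}F_{\Gamma_2}$, from Theorem~\ref{main}), and that the shifting operator $F\mapsto(F)_1$ is linear (Definition~\ref{shift}). For each graph I would identify the isomorphism type of $\Gamma\setminus\{i\}$ as $i$ ranges over the vertices, rewrite each factor through the graph-associahedron dictionary of Example~\ref{graph}, and then collect terms.

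First, the two \emph{homogeneous} cases. For the permutohedron $\Gamma=K_n$, every deletion gives $K_{n-1}\cong Pe^{n-2}$, so the $n$ summands in Theorem~\ref{recursive} coincide and $F(Pe^{n-1})=n(F(Pe^{n-2}))_1$. For the cyclohedron $\Gamma=C_n$, deleting any single vertex of the cycle opens it into the path $L_{n-1}\cong As^{n-2}$, so again all $n$ summands agree and $F(Cy^{n-1})=n(F(As^{n-2}))_1$.

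Next, the two \emph{split} cases. For the associahedron $\Gamma=L_n$ on the vertices $1,\dots,n$, deleting $i$ disconnects the path into the two subpaths on $\{1,\dots,i-1\}$ and $\{i+1,\dots,n\}$, namely $L_{i-1}\cong As^{i-2}$ and $L_{n-i}\cong As^{n-i-1}$; multiplicativity gives $F(L_n\setminus\{i\})=F(As^{i-2})F(As^{n-i-1})$, and summing over $i$ (then renaming $i$ to $k$) yields the stated convolution once the linear operator $(\cdot)_1$ is pulled outside the sum. For the stellohedron $\Gamma=K_{1,n-1}$ one separates the deletion of the center from the deletion of a leaf: removing the center leaves the discrete graph on $n-1$ vertices, while removing any of the $n-1$ leaves leaves $K_{1,n-2}\cong St^{n-2}$. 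I would compute $F$ of the discrete graph $E_{n-1}$ by multiplicativity, $F(E_{n-1})=F(E_1)^{n-1}=M_{(1)}^{n-1}$ (a single vertex is $Pe^0$ with $F=M_{(1)}$), so Theorem~\ref{recursive} gives $F(St^{n-1})=(M_{(1)}^{n-1})_1+(n-1)(F(St^{n-2}))_1=((n-1)F(St^{n-2})+M_{(1)}^{n-1})_1$.

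There is no serious mathematical obstacle here; the content is entirely in Theorem~\ref{recursive}, and the remaining work is bookkeeping. The only delicate points are the boundary conventions---reading $As^{-1}$ as the empty polytope with $F=M_{()}=1$, so that the terms $i=1$ and $i=n$ in the associahedron sum reduce to $F(As^{n-2})$, and checking the base case $n=1$ (where $Pe^0=K_1$ gives $M_{(1)}=(1)_1$)---together with the evaluation $F(E_{n-1})=M_{(1)}^{n-1}$ in the stellohedron case, which is precisely where the extra additive term $M_{(1)}^{n-1}$ in that recurrence originates.
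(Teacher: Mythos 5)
Your proposal is correct and follows exactly the derivation the paper intends: apply the vertex-deletion recurrence of Theorem~\ref{recursive} to $K_n$, $L_n$, $C_n$, $K_{1,n-1}$, identify the vertex-deleted subgraphs, and use multiplicativity of $F$ over disjoint unions together with the conventions $F(As^{-1})=1$ and $F(E_{n-1})=M_{(1)}^{n-1}$. The paper gives no further details for this corollary, and your bookkeeping (including the boundary cases) supplies precisely the omitted steps.
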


From Proposition \ref{vert} and Corollary \ref{correcursive} we
recover the recurrence relations satisfied by numbers of vertices
of corresponding graph-associahedra. Note the identity
$\chi((F)_1,-1)=-\chi(F,-1)$ which is a consequence of
$\chi(M_\alpha,-1)=(-1)^{k(\alpha)}$.

\begin{corollary}
For $n\geq 1$ we have that the numbers $p_n=f_0(Pe^{n-1})$, \
$a_n=f_0(As^{n-1}), c_n=f_0(Cy^{n-1})$ and $s_n=f_0(St^{n-1})$
satisfy

\[p_n=np_{n-1},
a_n=\sum_{k=1}^{n}a_{k-1}a_{n-k}, c_n=na_{n-1},
s_n=(n-1)s_{n-1}+1\] with $p_1=a_1=c_1=s_1=1$. Therefore $p_n=n!,
a_n=\frac{1}{n+1}{2n \choose n}, c_n={2n-2 \choose n-1}$ and
$s_n=(n-1)!\sum_{k=0}^{n-1}\frac{1}{k!}$.
\end{corollary}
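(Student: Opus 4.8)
The plan is to apply the character $\chi(\,\cdot\,,-1)$ to each of the four recurrence relations in Corollary \ref{correcursive} and then read off the result using Proposition \ref{vert}. Three ingredients make this routine. First, Proposition \ref{vert} identifies $\chi(F(Q),-1)=(-1)^{N}f_0(Q)$, where $N$ is the number of vertices of the graph underlying $Q$ (so $N=n$ for $Pe^{n-1}$, $As^{n-1}$, $Cy^{n-1}$, $St^{n-1}$, and the degree drops accordingly for the smaller polytopes appearing on the right-hand sides). Second, the shifting identity $\chi((F)_1,-1)=-\chi(F,-1)$ noted just before the statement lets us strip off the outer $(\,\cdot\,)_1$ at the cost of a single sign. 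Third, $\chi(\,\cdot\,,-1)=\mathrm{ps}_{-1}$ is an algebra homomorphism on $QSym$, hence multiplicative on products such as $F(As^{k-2})F(As^{n-k-1})$ and on the power $M_{(1)}^{n-1}$; combined with $\chi(M_{(1)},-1)=-1$ this evaluates every term.

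Carrying this out, the permutohedron relation $F(Pe^{n-1})=n(F(Pe^{n-2}))_1$ becomes $(-1)^{n}p_n=n\cdot(-1)\cdot(-1)^{n-1}p_{n-1}$, and after cancelling the common factor $(-1)^n$ we obtain $p_n=np_{n-1}$. For the associahedron, applying $\chi(\,\cdot\,,-1)$ to $F(As^{n-1})=\bigl(\sum_{k=1}^{n}F(As^{k-2})F(As^{n-k-1})\bigr)_1$ and using multiplicativity gives $(-1)^n a_n=-\sum_{k=1}^n(-1)^{k-1}a_{k-1}\cdot(-1)^{n-k}a_{n-k}$; the exponents inside the sum all collapse to $(-1)^{n-1}$, so the outer sign restores $(-1)^n$ and yields $a_n=\sum_{k=1}^{n}a_{k-1}a_{n-k}$. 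The cyclohedron relation $F(Cy^{n-1})=n(F(As^{n-2}))_1$ similarly produces $(-1)^n c_n=n\cdot(-1)\cdot(-1)^{n-1}a_{n-1}$, i.e. $c_n=na_{n-1}$, and the stellohedron relation $F(St^{n-1})=\bigl((n-1)F(St^{n-2})+M_{(1)}^{n-1}\bigr)_1$ gives $(-1)^n s_n=-\bigl((n-1)(-1)^{n-1}s_{n-1}+(-1)^{n-1}\bigr)=(-1)^n\bigl((n-1)s_{n-1}+1\bigr)$, whence $s_n=(n-1)s_{n-1}+1$.

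It remains to solve these recurrences with the initial conditions $p_1=a_1=c_1=s_1=1$. Here one must fix the boundary conventions consistently with the empty polytope: since $As^{-1}=\emptyset$ and $F(\emptyset)=M_{()}=1$, we read $a_0=1$, which is exactly what the Catalan recurrence $a_n=\sum_{k=1}^n a_{k-1}a_{n-k}$ requires. Then $p_n=np_{n-1}$ gives $p_n=n!$, the Catalan recurrence gives $a_n=\frac{1}{n+1}\binom{2n}{n}$, the identity $c_n=na_{n-1}=n\cdot\frac{1}{n}\binom{2n-2}{n-1}=\binom{2n-2}{n-1}$ is immediate, and $s_n=(n-1)s_{n-1}+1$ is verified by induction to equal $(n-1)!\sum_{k=0}^{n-1}\frac{1}{k!}$ (the inductive step reduces to $(n-1)!\cdot\frac{1}{(n-1)!}=1$).

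The computation is essentially mechanical; the only point that genuinely requires care is the sign bookkeeping together with the degree shifts when invoking Proposition \ref{vert} for the polytopes of different dimensions occurring on the two sides of each recurrence. In particular one must track that the factor $(-1)^{k-1}(-1)^{n-k}$ in the associahedron sum is independent of $k$, and correctly account for the empty-polytope term $F(As^{-1})$ at the endpoints $k=1$ and $k=n$; once these are settled, all signs cancel to leave the stated recurrences.
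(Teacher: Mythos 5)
Your proposal is correct and follows exactly the route the paper intends: apply $\chi(\,\cdot\,,-1)$ to the recurrences of Corollary \ref{correcursive}, use Proposition \ref{vert} together with the sign rule $\chi((F)_1,-1)=-\chi(F,-1)$ and multiplicativity of the principal specialization, and then solve the resulting numerical recurrences. The paper leaves all of this implicit, so your careful sign and boundary bookkeeping (in particular $a_0=1$ from $F(\emptyset)=1$) simply makes explicit what the author omits.
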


\section{Conclusion}

We conclude with several natural questions in connection with the
Hopf algebra $\mathcal{B}$ and the graph invariant $F_\Gamma$.

\begin{problem}
In a combinatorial Hopf algebra are defined the generalized
Dehn-Sommerville relations which characterize the odd subalgebra
(see \cite{ABS}, Section 5). Find a graph or a building set that
satisfies the generalized Dehn-Sommerville relations for
$\mathcal{B}$. The same problem is resolved in \cite{GSJ} for the
chromatic Hopf algebra of hypergraphs, where the whole class of
solutions called eulerian hypergraphs are found.
\end{problem}

\begin{problem}
Stanley asked whether the chromatic symmetric function $X_\Gamma$
is a complete invariant of trees. This question is still opened.
The same question is natural to be posed for the enumerator
$F_\Gamma$.
\end{problem}






\end{document}